\documentclass[11pt]{article}
    
\usepackage{amsthm}
\usepackage{amssymb}
\usepackage{hyperref}                           
\usepackage{mathrsfs}
\usepackage{color} 
\usepackage{amsmath}                     
\usepackage{diagbox}      
\usepackage[normalem]{ulem}         

\def\fl#1{\left\lfloor#1\right\rfloor}

\def\sttf2#1#2{\left[\!\!\left[#1\atop#2\right]\!\!\right]}
\def\stf3f#1#2{\left[\!\!\left[\!\!\left[#1\atop#2\right]\!\!\right]\!\!\right]}
\def\stff4#1#2{\left[\!\!\left[\!\!\left[\!\!\left[#1\atop#2\right]\!\!\right]\!\!\right]\!\!\right]}
\def\stss2#1#2{\left\{\!\!\left\{#1\atop#2\right\}\!\!\right\}}

\newtheorem{theorem}{Theorem}
\newtheorem{Prop}{Proposition}
\newtheorem{Cor}{Corollary}
\newtheorem{Lem}{Lemma}

\allowdisplaybreaks 

\title{Continued fractions, determinant expressions, and identities}  
\author{
Nikita Kalinin\\
Mathematics and Computer Science Department\\
Guangdong Technion Israel Institute of Technology\\
Shantou 515603, China\\
\texttt{nikita.kalinin@gtiit.edu.cn}
\and
Takao Komatsu\\
Institute of Mathematics, Henan Academy of Sciences\\
Zhengzhou 450046, China\\
\and
Department of Mathematics, Institute of Science Tokyo\\
2-12-1 Ookayama, Meguro-ku, Tokyo 152-8551, Japan\\
\texttt{komatsu@zstu.edu.cn}\\
\texttt{komatsu.t.al@m.titech.ac.jp}
}
\date{}

\begin{document}

\maketitle

\begin{abstract}
In this paper, we clarified the relationship between continued fractions, determinants, and identities, making it easier to apply these methods systematically in other settings. In particular, we studied finite continued fractions from the perspective of incomplete numbers (restricted or associated numbers) and also explored their relationships with determinant representations and identities. Most of the new results in this paper concern 
$q$-analogues of special numbers, whereas the classical cases mainly serve to illustrate and unify the general framework. The framework developed here is flexible and allows one to derive continued fractions, determinant formulas, and coefficient identities in a uniform way for several new 
$q$-families, and it is expected to be applicable to other families of special numbers, as well.

{\small keywords: determinants; continued fractions; identities; $q$-analogues} 

{\small Mathematics Subject Classification:} 05A15, 05A19, 11A55, 11B39, 11B75, 15A15
\end{abstract}

\section{Introduction}  

The continued fraction expansions of generating functions, functions, and series are not unique, unlike the continued fractions of real numbers, which makes them rich but difficult to systematize.  
One well-known example is the infinite series $\sum_{n=1}^\infty 2^{-\fl{n\alpha}}$ with $\alpha=(1+\sqrt{5})/2$, which can be expressed as a continued fraction whose convergents involve powers of two with Fibonacci exponents \cite{Davison}. This result has been generalized a little \cite{Bundschuh}, but has not been extended systematically.   
Another famous example concerns the Bernoulli numbers $B_n$, defined by the generating function 
\begin{equation}  
\sum_{n=0}^\infty B_n\frac{x^n}{n!}=\frac{x}{e^x-1}\,.  
\label{def_ber}
\end{equation}  
By using their continued fractions, Kaneko \cite{Kaneko95} obtained the recurrence relations.  
Frame \cite{Frame} gave continued fraction expansions of the divergent Bernoulli number series $\sum B_{2 n}x^{2 n}$ and some related series. More continued fraction expansions of Bernoulli numbers are given by D. Zagier in \cite[A.5]{AIK}. However, no concrete method for obtaining continued fractions has been found.  Some analytical theories can be found in \cite{JT,Wall}.  

On the other hand, it is interesting to attempt to express numbers themselves using determinants. This may be regarded as a special case of the so-called Hankel determinant, but historically, Glaisher's results \cite{Glaisher} are classically famous. He expressed the Bernoulli numbers as 
$$
B_n=(-1)^n n!\left|\begin{array}{ccccc}  
\frac{1}{2!}&1&0&&\\ 
\frac{1}{3!}&\frac{1}{2!}&1&&\\ 
\vdots&&\ddots&&0\\ 
\frac{1}{n!}&\frac{1}{(n-1)!}&&\ddots&1\\
\frac{1}{(n+1)!}&\frac{1}{n!}&\cdots&\frac{1}{3!}&\frac{1}{2!}
\end{array}\right| 
$$
and more in various forms, and also provided determinant representations for Cauchy numbers $c_n$ (also known as Gregory numbers or Bernoulli numbers of the second kind), Euler numbers $E_n$, and others. Such forms of determinants were studied by Trudi \cite{Trudi} and by Brioschi \cite{Brioschi} before Glaisher. Further theory and historical background can be found in \cite{Muir}.  
These determinantal expressions have been generalized (see, e.g., \cite{KKL22,Ko20b,KP19,KR18,KZ}) and applied to the finite (incomplete) cases (see, e.g., \cite{Ko16,Ko18,KLM16,KMS16}).  

Classically, continuant theory starts from a prescribed continued fraction and derives determinant formulas for its convergents (see, e.g., \cite{Perron,Wall}). Likewise, Trudi- or Jacobi--Trudi-type formulas provide determinant expressions once the relevant coefficient sequence is fixed (see, e.g., \cite{Muir,Trudi}). Our point of view in this paper is different. Rather than treating continued fractions and determinant formulas as separate outputs, we regard continued fractions, reciprocal coefficient identities, and Toeplitz--Hessenberg determinant expressions as different manifestations of the same mechanism. We make this mechanism explicit in a form that can be transferred systematically from one family of special numbers to another, and we also develop its finite/incomplete version, where truncation on the continued-fraction side naturally corresponds to restricted or associated numbers on the coefficient~side.

The purpose of this paper is to formulate this unified framework explicitly and to show how it can be applied systematically. After presenting the general correspondence between continued fractions, determinant expressions, and coefficient identities, we develop its finite/incomplete analogue and apply it to several families of special numbers, including $q$-Bernoulli numbers, $q$-Cauchy numbers, $q$-hypergeometric Euler numbers, Lehmer--Euler numbers, and generalized harmonic numbers. The same viewpoint is expected to be useful in other contexts, as well; see, for example, \cite{Ko26a}.

{Section 3 applies this framework to 
$q$-Bernoulli, 
$q$-Cauchy, and 
$q$-hypergeometric Euler-type numbers, where the resulting statements are new 
$q$-analogues of known classical formulas. Section~4 treats the finite/incomplete setting, and Sections 5 and 6 discuss applications to Lehmer–Euler and hyperharmonic-type numbers.}

{The main contributions of the paper are therefore: a unified framework theorem, its systematic application to new 
$q$-families, and its finite/incomplete analogue.}

\section{Unified framework}  

Several types of continued fractions of the (generating) functions have been introduced and studied. For example, the generating function of Cauchy numbers $c_n$ has the continued fraction    
\begin{align}  
&\sum_{n=0}^\infty c_n\frac{x^n}{n!}=\frac{x}{\log(1+x)}\notag\\
&=1+\cfrac{x}{2-x+\cfrac{2^2 x}{3-2 x+\cfrac{3^2 x}{4-3 x+\ddots}}}
\label{j-cont-cauchy} 
\end{align}   
\cite[Chapter 8]{Loya} and another continued fraction 
\begin{align}  
&\sum_{n=0}^\infty c_n\frac{x^n}{n!}=\frac{x}{\log(1+x)}\notag\\
&=1+\cfrac{1^2 x}{2+\cfrac{1^2 x}{3+\cfrac{2^2 x}{4+\cfrac{2^2 x}{5+\cfrac{3^2 x}{6+\cdots}}}}}
\label{t-cont-cauchy} 
\end{align}   
({\it cf.}\, \cite[(90.1)]{Wall}). 
The former type of continued fraction expansion, as in (\ref{j-cont-cauchy}), has been often studied. A similar but more general type of continued fraction is the Jacobi-type continued fraction ($J$-fraction), which is usually written in the form
$$
\cfrac{1}{1-c_0 z-\cfrac{b_1}{1-c_1 z-\cfrac{b_2}{1-c_2 z-{\atop\ddots}}}}\,  
$$ 
One of the advantages of such fractions is to yield the continued fraction expansions and determinant expressions more easily.  
In \cite{Ko21}, several continued fraction expansions and determinant expressions of hypergeometric Cauchy numbers, shifted Cauchy numbers, and leaping Cauchy numbers are given.  
Some special types are known as $J$-fractions, $C$-fractions, $T$-fractions, $M$-fractions, and Hankel continued fractions (see, e.g., \cite{Frame,JT,Perron,Wall}).

For a certain class of $T$-fractions, there are useful relations yielding determinant expressions and further identities.  The next result combines familiar ingredients from continued-fraction theory, inversion relations, and Trudi-type determinant identities, and arranges them in a single framework convenient for the applications developed below. {Its significance is that the same formal mechanism simultaneously produces continued fractions, determinant expressions, and reciprocal coefficient identities, and can then be transferred systematically to new $q$-families and truncated settings.}

\begin{theorem}  
For three sequences $\{f_n\}_{n\ge 0}$ (with $f_0=1$), $\{g_n\}_{n\ge 1}$, and $\{h_n\}_{n\ge 1}$, we have (assuming $\frac{H_0}{G_0}=1$)
\begin{align*}  
\sum_{n=0}^\infty f_n x^n&=\left(\sum_{j=0}^\infty\frac{h_1\cdots h_j}{g_1\cdots g_j}x^j\right)^{-1}:=\left(\sum_{j=0}^\infty\frac{H_j}{G_j}x^j\right)^{-1}\\
&=1-\cfrac{h_1 x}{g_1+h_1 x-\cfrac{g_1 h_2 x}{g_2+h_2 x-\cfrac{g_2h_3 x}{g_3+h_3 x-\ddots}}}\\
&\Longleftrightarrow\\ 
&f_n=(-1)^n\left|\begin{array}{ccccc}
\frac{H_1}{G_1}&1&0&&\\  
\frac{H_2}{G_2}&\frac{H_1}{G_1}&1&&\\
\vdots&\vdots&\ddots&&0\\
\frac{H_{n-1}}{G_{n-1}}&\frac{H_{n-2}}{G_{n-2}}&\cdots&\frac{H_1}{G_1}&1\\ 
\frac{H_n}{G_n}&\frac{H_{n-1}}{G_{n-1}}&\cdots&\frac{H_2}{G_2}&\frac{H_1}{G_1}
\end{array}\right|\\
&\Longleftrightarrow\quad\sum_{k=0}^n\frac{f_k H_{n-k}}{G_{n-k}}=\begin{cases}
1&\text{if $n=0$},\\
0&\text{if $n\ge 1$,}
\end{cases}\\ 
&\Longleftrightarrow\\ 
&\frac{H_n}{G_n}=(-1)^n\left|\begin{array}{ccccc}
f_1&1&0&&\\  
f_2&f_1&1&&\\
\vdots&\vdots&\ddots&&0\\
f_{n-1}&f_{n-2}&\cdots&f_1&1\\ 
f_n&f_{n-1}&\cdots&f_2&f_1
\end{array}\right|\\
&\Longleftrightarrow\\ 
&f_n=\sum_{t_1+2 t_2+\cdots+n t_n=n}\binom{t_1+\cdots+t_n}{t_1,\dots,t_n}(-1)^{t_1+\cdots+t_n}\left(\frac{H_1}{G_1}\right)^{t_1}\left(\frac{H_2}{G_2}\right)^{t_2}\cdots\left(\frac{H_n}{G_n}\right)^{t_n}\\
&\phantom{f_n}=\sum_{k=1}^n(-1)^k\sum_{i_1+\cdots+i_k=n\atop i_1,\dots,i_k\ge 1}\frac{H_{i_1}}{G_{i_1}}\cdots\frac{H_{i_k}}{G_{i_k}}\\
&\Longleftrightarrow\\ 
&\frac{H_n}{G_n}=\sum_{t_1+2 t_2+\cdots+n t_n=n}\binom{t_1+\cdots+t_n}{t_1,\dots,t_n}(-1)^{t_1+\cdots+t_n}f_1^{t_1}f_2^{t_2}\cdots f_n^{t_n}\\
&\phantom{\frac{H_n}{G_n}}=\sum_{k=1}^n(-1)^k\sum_{i_1+\cdots+i_k=n\atop i_1,\dots,i_k\ge 1}f_{i_1}\cdots f_{i_k}\,.   
\end{align*}
\label{th:det-cont}
\end{theorem}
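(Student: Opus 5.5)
The plan is to take the convolution identity $\sum_{k=0}^n f_k H_{n-k}/G_{n-k}=\delta_{n,0}$ as the hub and show that every other item is equivalent to it. Write $A(x)=\sum_{j\ge0}a_jx^j$ with $a_j=H_j/G_j$ and $a_0=1$, and $F(x)=\sum_n f_nx^n$. The statement $F=A^{-1}$ in formal power series is, coefficient by coefficient, exactly the convolution identity. So the first and third items coincide once the continued fraction is handled separately.

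\emph{Determinants.} The convolution identity gives $f_n=-\sum_{k=1}^n a_kf_{n-k}$, a triangular linear system $L\mathbf f=\mathbf e_1$, where $L$ is lower triangular Toeplitz with $1$ on the diagonal and $a_k$ on the $k$-th subdiagonal. By Cramer's rule, $f_n$ is the determinant of the matrix obtained by replacing the last column of $L$. Expanding this (or, more simply, expanding the displayed Hessenberg determinant $D_n$ along its first column) gives $D_n=\sum_{k=1}^n(-1)^{k-1}a_kD_{n-k}$. Hence $(-1)^nD_n$ satisfies the same recurrence and initial value as $f_n$, and induction identifies them. Since the relation $FA=1$ is symmetric in $F$ and $A$, the same argument with the roles of $f$ and $a$ exchanged gives the determinant formula for $H_n/G_n$.

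\emph{Trudi/composition sums.} Write $A=1+U$ with $U=\sum_{j\ge1}a_jx^j$. Then $F=\sum_{k\ge0}(-1)^kU^k$. The coefficient of $x^n$ in $U^k$ is $\sum_{i_1+\cdots+i_k=n}a_{i_1}\cdots a_{i_k}$, which gives the composition form. Grouping the compositions by their multiset of parts, with $t_j$ parts equal to $j$, produces the multinomial coefficient $\binom{t_1+\cdots+t_n}{t_1,\dots,t_n}$ and the sign $(-1)^{t_1+\cdots+t_n}$, which gives the Trudi form. Again, symmetry yields the formulas for $H_n/G_n$. Each of these expressions is derived from $F=A^{-1}$ and in turn determines the $f_n$ uniquely through the recurrence, so all the equivalences hold.

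\emph{Continued fraction (main step).} I need
\[
\Bigl(\sum_j \tfrac{H_j}{G_j}x^j\Bigr)^{-1}=1-\cfrac{h_1x}{g_1+h_1x-\cdots}.
\]
Define the tails $A_m(x)=\sum_{j\ge0}\frac{h_{m+1}\cdots h_{m+j}}{g_{m+1}\cdots g_{m+j}}x^j$, so that $A_0=A$ and $A_m=1+\frac{h_{m+1}x}{g_{m+1}}A_{m+1}$. From this,
\[
\frac{1}{A_m}=1-\frac{h_{m+1}xA_{m+1}}{g_{m+1}A_m}
=1-\frac{h_{m+1}x}{g_{m+1}A_m/A_{m+1}}
=1-\frac{h_{m+1}x}{g_{m+1}/A_{m+1}+h_{m+1}x}.
\]
Substituting $1/A_{m+1}$ from the same identity gives
\[
\frac{g_{m+1}}{A_{m+1}}=g_{m+1}-\frac{g_{m+1}h_{m+2}x}{g_{m+2}/A_{m+2}+h_{m+2}x},
\]
so the denominator becomes $g_{m+1}+h_{m+1}x-\frac{g_{m+1}h_{m+2}x}{\cdots}$. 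Iterating from $m=0$ produces exactly the stated fraction.

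The expected obstacle is making this infinite iteration rigorous. I would do it formally: the $N$-th truncation, with the tail $g_N/A_N$ replaced by $g_N$, agrees with $1/A$ modulo $x^{N+1}$, because every step of the tail substitution contributes a factor of $x$. Hence the continued fraction converges $x$-adically to $1/A$. Throughout I assume $g_j\ne0$.
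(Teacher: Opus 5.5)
Your proof is correct, but it takes a different route from the paper's. The paper's proof is an assembly of known results. For the continued fraction, it takes the explicit convergents $P_n(x)=g_1\cdots g_n$ and $Q_n(x)=g_1\cdots g_n\sum_{j=0}^n\frac{H_j}{G_j}x^j$ and checks that they satisfy the three-term recurrence of the fraction. The Hessenberg determinant comes from Cameron's operator together with a quoted inversion relation. The multinomial sums come from applying Trudi's formula to that determinant, and the composition sums are quoted from earlier work.

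You instead make everything self-contained around the convolution identity $FA=1$:
\begin{itemize}
\item The determinant is tied to $FA=1$ by the first-column recurrence $D_n=\sum_k(-1)^{k-1}a_kD_{n-k}$.
\item Both explicit sums are read off from $(1+U)^{-1}=\sum_k(-1)^kU^k$, so Trudi's formula is effectively re-derived rather than invoked.
\item The formulas for $H_n/G_n$ follow from the symmetry of $FA=1$, using $f_0=a_0=1$.
\item The continued fraction comes from the tail identity $A_m=1+\frac{h_{m+1}x}{g_{m+1}}A_{m+1}$, an Euler-type transformation, in place of the Wallis-recurrence check.
\end{itemize}
Your uniqueness remark also makes explicit the logic behind the ``$\Longleftrightarrow$'' signs, which the paper leaves implicit.

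One sharpening: your tail identities are purely algebraic consequences of the recursion. So instead of saying that each step ``contributes a factor of $x$'', set $A_N=1$ and run the recursion backward. This shows that the $N$-th truncation equals exactly $\bigl(\sum_{j=0}^N\frac{H_j}{G_j}x^j\bigr)^{-1}$, which is precisely the paper's $P_N/Q_N$, and the congruence modulo $x^{N+1}$ is then immediate.

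What the paper's route buys is the explicit numerator and denominator together with their recurrence. It reuses this structure later, for instance in the proof of Theorem~\ref{th:q-ber-cont}, where $P_n$ and $Q_n$ are written down directly.
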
 
\begin{proof}  We begin by collecting several ingredients from the literature into a single framework that will serve as the basis for the finite/incomplete extensions and the applications developed later.
The first part comes from \cite[p.~696]{Ko20}. The $n$-th convergent $P_n(x)/Q_n(x)$ is given by 
$$
P_n(x)=g_1\cdots g_n\quad\hbox{and}\quad Q_n(x)=g_1\cdots g_n\sum_{j=0}^n\frac{h_1\cdots h_j}{g_1\cdots g_j}x^j\,, 
$$ 
respectively, satisfying the recurrence relations 
\begin{align*}
P_n(x)&=(g_n+h_n x)P_{n-1}(x)-g_{n-1}h_n x P_{n-2}(x)\quad(n\ge {2})\,,\\
Q_n(x)&=(g_n+h_n x)Q_{n-1}(x)-g_{n-1}h_n x Q_{n-2}(x)\quad(n\ge {2})
\end{align*} 
with $P_{-1}(x)=P_0(x)=Q_0(x)=1$ and $Q_{-1}(x)=0$.  Hence, 
$$
\frac{P_n(x)}{Q_n(x)}\equiv \sum_{k=0}^\infty f_k x^k\pmod{x^{k+1}}
$$ 
or 
$$
\sum_{n=0}^\infty f_n x^n=\lim_{n\to\infty}\frac{P_n(x)}{Q_n(x)}=\left(\sum_{j=0}^\infty\frac{h_1\cdots h_j}{g_1\cdots g_j}x^j\right)^{-1}\,. 
$$ 
See also \cite{Ko21}.  

The second part is due to Cameron's operator \cite{Cameron} for one sequence $\{f_n\}_{n\ge 0}$ to another sequence $\{h_n/g_n\}_{n\ge 0}$. As a result of this, it is possible to apply it to many special numbers. More details can be seen in \cite{KKL22}. Very recently, by combining it with the application of the method used in Hessenberg matrices \cite{Kilic10,KT10,KTH10, BelbachirBelkhirDjellas22, GoyShattuck24, GoyShattuck24Leonardo,GokcanDeger25}, it is possible to extend matrix identities for Fibonacci numbers to the setting of Fibonacci polynomials \cite{Ko25a,KK26}.

The third part can be obtained by applying the inversion relation (see, e.g., \cite{KR18}):  
\begin{align*}
&\sum_{k=0}^n(-1)^{n-k}\alpha_k R(n-k)=0\quad(n\ge 1)\\
&\Longleftrightarrow\\  
&\alpha_n=\begin{vmatrix} R(1) & 1 & & 0\\
R(2) & \ddots &  \ddots & \\
\vdots & \ddots &  \ddots & 1\\
R(n) & \cdots &  R(2) & R(1) \\
 \end{vmatrix}\quad 
\Longleftrightarrow\quad 
&R(n)=\begin{vmatrix} \alpha_1 & 1 & &0 \\
\alpha_2 & \ddots &  \ddots & \\
\vdots & \ddots &  \ddots & 1\\
\alpha_n & \cdots &  \alpha_2 & \alpha_1 \\
 \end{vmatrix}\,.
\end{align*}  

The remaining equivalences follow from Trudi’s formula \cite[Vol.~3, p.~214]{Muir},\cite{Trudi}:
\begin{align*}
&\begin{vmatrix} a_1 & a_0 & &0 \\
a_2 & \ddots &  \ddots & \\
\vdots & \ddots &  \ddots & a_0\\
a_n & \cdots &  a_2 & a_1 \\
 \end{vmatrix}\\
&=\sum_{t_1+2 t_2+\cdots+n t_n=n}\binom{t_1+\cdots+t_n}{t_1,\dots,t_n}(-a_0)^{n-t_1-\cdots-t_n}a_1^{t_1}a_2^{t_2}\cdots a_n^{t_n}\,, 
\end{align*} 
where $\binom{t_1+\cdots+t_n}{t_1,\dots,t_n}=\frac{(t_1+\cdots+t_n)!}{t_1!\cdots t_n!}$ is the multinomial coefficient. 
The double summation parts are due to \cite[Theorem 1]{Ko20b} with $m\to\infty$ or \cite[Theorem 3]{Ko20b} with $m=1$.   
\end{proof}

\section{Applications to $q$-Bernoulli and related numbers}   

The so-called $q$-numbers $[n]_q$ are defined by 
$$
[n]_q:=\frac{1-q^n}{1-q}\quad(q\ne 1)\,. 
$$

Then the $q$-factorial is given by $[n]_q!=[n]_q[n-1]_q\cdots[2]_q[1]_q$ ($n\ge 1$) with $[0]_q!=1$.  
The (Gauss) $q$-binomial coefficients are given by 
$$
\binom{n}{k}_q:=\frac{[n]_q!}{[k]_q![n-k]_q!}\,. 
$$   

Observe that
\[
[n]_q=1+q+\cdots+q^{n-1}\to n \qquad (q\to 1),
\]
so \(q\)-factorials tend to the usual factorials. In particular, for the two
standard \(q\)-exponentials
\[
e_q(x)=\sum_{n\ge0}\frac{x^n}{(q;q)_n},
\qquad
E_q(x)=\sum_{n\ge0}\frac{q^{n(n-1)/2}x^n}{(q;q)_n},
\]
one has, since \((q;q)_n=(1-q)^n[n]_q!\),
\[
e_q((1-q)x)\to e^x,
\qquad
E_q((1-q)x)\to e^x
\qquad (q\to 1).
\]
So \(e_q\) and \(E_q\) are distinct \(q\)-analogues away from \(q=1\), but both
recover the ordinary exponential in the classical limit.

There are many $q$-Bernoulli numbers. For example, Carlitz \cite{Carlitz48} defined them as 
$$
\beta_n:=\frac{1}{(1-q)^n}\sum_{k=0}^n(-1)^k\binom{n}{k}\frac{k+1}{[k+1]_q}
$$ 
or 
$$
\sum_{k=0}^n\binom{n}{k}q^{k+1}\beta_k-\beta_n=\begin{cases}  
1&\text{if $n=1$},\\
0&\text{if $n\ge 2$}\,. 
\end{cases}
$$ 
Here, $\binom{n}{k}$ are the usual binomial coefficients.   
However, these $q$-Bernoulli numbers do not have a beautiful determinant expression and/or a continued fraction expansion even though they satisfy other elegant relations.

\subsection{$q$-Bernoulli numbers}  

For simplicity, we write $[n]:=[n]_q$ without $q$. 
Now,  a $q$-Bernoulli number $\mathcal B_n=\mathcal B_{n,q}$ is defined by the generating function 
\begin{equation}  
\sum_{k=0}^\infty\mathcal B_k\frac{x^k}{k!}=\frac{x}{E_q(x)-1}\,,  
\label{def:q-ber}
\end{equation}
where 
$$
E_q(z):=\sum_{n=0}^\infty\frac{z^n}{[n]!}
$$
is a $q$-exponential function.  Note that another $q$-exponential function is defined by 
$$
e_q(z):=\sum_{n=0}^\infty\frac{z^n}{(1-q)(1-q^2)\cdots(1-q^n)}
$$
(see, e.g., \cite{Ernst}).  
In fact, more generalizations of $q$-Bernoulli numbers can be defined ({\it cf.\,\,}\cite[Theorem 1]{Ko20}), but for simplicity, we give the results about $q$-hypergeometric Bernoulli numbers $\mathcal B_{N,n}=\mathcal B_{N,n,q}$ by applying Theorem \ref{th:det-cont} with $g_j=[N+j]_q$ and $h_j=1$ ($N\ge 1$,\,$j\ge 1$). When $N=1$, $\mathcal B_n=\mathcal B_{1,n}$ are $q$-Bernoulli numbers defined above.  
 
Although the results follow immediately from Theorem \ref{th:det-cont}, in this subsection, we shall see the details independently to understand the structures more clearly.

The $q$-hypergeometric Bernoulli numbers $\mathcal B_{N,n}$ have an explicit continued fraction expansion.   

\begin{theorem}  
We have 
\begin{align}
&\sum_{k=0}^\infty\mathcal B_{N,k}\frac{x^k}{k!}:=\left(\sum_{j=0}^\infty\frac{[N]_q!}{[N+j]_q!}x^j\right)^{-1}\notag\\
&=1-\cfrac{x}{[N+1]_q+x-\cfrac{[N+1]_q x}{[N+2]_q+x-\cfrac{[N+2]_q x}{[N+3]_q+x-\ddots}}}\,
\label{eq:q-ber-cont}
\end{align} 
\label{th:q-ber-cont}
\end{theorem}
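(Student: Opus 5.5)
The plan is to specialize Theorem \ref{th:det-cont} to $g_j=[N+j]_q$ and $h_j=1$ for $j\ge 1$. The first equality in (\ref{eq:q-ber-cont}) is the definition of $\mathcal B_{N,k}$. For these choices
$$
\frac{H_j}{G_j}=\frac{h_1\cdots h_j}{g_1\cdots g_j}=\frac{1}{[N+1]_q[N+2]_q\cdots[N+j]_q}=\frac{[N]_q!}{[N+j]_q!},
$$
so the series $\sum_j \frac{H_j}{G_j}x^j$ is exactly the one being inverted. The $T$-fraction in Theorem \ref{th:det-cont} then becomes the fraction in (\ref{eq:q-ber-cont}), with partial denominators $g_j+h_jx=[N+j]_q+x$ and partial numerators $g_{j-1}h_jx=[N+j-1]_qx$ for $j\ge2$. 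The theorem follows once the first equivalence in Theorem \ref{th:det-cont} is justified, with $f_k=\mathcal B_{N,k}/k!$.

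Since the paper says the details will be seen independently, I would also give a self-contained check of that first part. Define $P_n(x)=[N+1]_q\cdots[N+n]_q$ and $Q_n(x)=[N+1]_q\cdots[N+n]_q\sum_{j=0}^n\frac{[N]_q!}{[N+j]_q!}x^j$. Then verify by induction that both satisfy the three-term recurrence
$$
Y_n=([N+n]_q+x)Y_{n-1}-[N+n-1]_q\,x\,Y_{n-2}\qquad (n\ge2),
$$
with initial values $P_0=Q_0=1$, $P_1=[N+1]_q$, $Q_1=[N+1]_q+x$. These initial values match the first convergents $1$ and $1-\frac{x}{[N+1]_q+x}=\frac{[N+1]_q}{[N+1]_q+x}$.

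For $P_n$ the recurrence check is immediate: $([N+n]+x)P_{n-1}-[N+n-1]xP_{n-2}=[N+n]P_{n-1}+x(P_{n-1}-[N+n-1]P_{n-2})=[N+n]P_{n-1}$. For $Q_n$, write $Q_n=\sum_{j=0}^n \frac{[N+1]\cdots[N+n]}{[N+1]\cdots[N+j]}x^j$ and compare coefficients of $x^j$. The top term $x^n$ arises only from $xQ_{n-1}$. A lower coefficient $x^j$ receives $[N+n]\cdot c_{n-1,j}$ from the first term, plus $x\cdot(Q_{n-1}-[N+n-1]Q_{n-2})$, which is exactly the $x^{n-1}$ term of $Q_{n-1}$ times $x$. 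So the recurrence reduces to $Q_n-[N+n]Q_{n-1}=x^n$, which is clear from the explicit sum. This coefficient bookkeeping is the only step that needs care, and it is mild.

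Finally, by the standard theory of continued fractions, the $n$-th convergent of (\ref{eq:q-ber-cont}) equals $P_n/Q_n$. Because $P_n$ is a constant and $Q_n$ is the $n$-th partial sum of the inverted series, $P_n/Q_n$ agrees with $\bigl(\sum_{j\ge0}\frac{[N]_q!}{[N+j]_q!}x^j\bigr)^{-1}$ modulo $x^{n+1}$. Letting $n\to\infty$ in the formal power series topology gives (\ref{eq:q-ber-cont}).
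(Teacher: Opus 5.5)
Your proposal is correct and follows the paper's proof essentially verbatim. The paper defines the same $P_n=[N+n]_q!/[N]_q!$ and $Q_n=[N+n]_q!\sum_{k=0}^n x^k/[N+k]_q!$ and checks the same three-term recurrence and initial values to identify $P_n/Q_n$ as the $n$-th convergent; it then uses $P_n/Q_n\equiv\bigl(\sum_{j\ge0}[N]_q!\,x^j/[N+j]_q!\bigr)^{-1}\pmod{x^{n+1}}$ and lets $n\to\infty$. Your coefficient bookkeeping for $Q_n$ is phrased loosely, but its clean form is the one you end up using: the recurrence is equivalent to $Q_n-[N+n]_qQ_{n-1}=x\,(Q_{n-1}-[N+n-1]_qQ_{n-2})$, and both sides equal $x^n$.
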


\noindent 
{\it Remark.}  
When $N=1$ and $q\to 1$ in Theorem \ref{th:q-ber-cont}, we get the continued fraction expansion of the generating function of the classical Bernoulli numbers (see, e.g., \cite[Corollary 2]{Ko20}).  
$$
\frac{x}{e^x-1}=\sum_{k=0}^\infty B_k\frac{x^k}{k!}=1-\cfrac{x}{2+x-\cfrac{2 x}{3+x-\cfrac{3 x}{4+x-\ddots}}}\, 
$$ 

\begin{proof}[Proof of Theorem \ref{th:q-ber-cont}.]
For $n\ge 0$, set 
$$
P_n(x):=\frac{[N+n]_q!}{[N]_q!}\quad\hbox{and}\quad Q_n(x):=[N+n]_q!\sum_{k=0}^n\frac{x^k}{[N+k]_q!}\,.
$$
Note that $Q_n(x)$ are the polynomials with integer coefficients. Then, we can see that $P_n(x)$'s and $Q_n(x)$'s satisfy the recurrence relations 
\begin{align*}
&P_n(x)=([N+n]_q+x)P_{n-1}(x)-[N+n-1]_q x P_{n-2}(x)\quad(n\ge 2),\\ 
&Q_n(x)=([N+n]_q+x)Q_{n-1}(x)-[N+n-1]_q x Q_{n-2}(x)\quad(n\ge 2),
\end{align*}  
with $P_0(x)=Q_0(x)=1$, $P_1(x)=[N+1]_q$, and $Q_1(x)=[N+1]_q+x$.  
Therefore, we know that $P_n(x)/Q_n(x)$ is the $n$-th convergent of the continued fraction expansion of the right-hand side of (\ref{eq:q-ber-cont}). Taking $n\to\infty$ for $P_n(x)/Q_n(x)$, we obtain the expression of the right-hand side of (\ref{eq:q-ber-cont}).  
On the other hand, by the approximation property in \cite[p.~696]{Ko20}, we have 
$$
\frac{P_n}{Q_n}\equiv
\left(\sum_{j=0}^\infty\frac{[N]_q!}{[N+j]_q!}x^j\right)^{-1}\pmod{x^{n+1}}\,. 
$$ 
Hence, taking $n\to\infty$ for $P_n(x)/Q_n(x)$, we obtain the expression of the left-hand side of (\ref{eq:q-ber-cont}).  
\end{proof}

The $q$-hypergeometric Bernoulli numbers $\mathcal B_{N,n}$ have a determinant expression.  

\begin{theorem}  
For $N,n\ge 1$, we have 
$$
\mathcal B_{N,n}=(-1)^n n!\left|\begin{array}{ccccc}
\frac{[N]_q!}{[N+1]_q!}&1&0&&\\  
\frac{[N]_q!}{[N+2]_q!}&\frac{[N]_q!}{[N+1]_q!}&1&&\\
\vdots&\vdots&\ddots&1&0\\
\frac{[N]_q!}{[N+n-1]_q!}&\frac{[N]_q!}{[N+n-2]_q!}&\cdots&\frac{[N]_q!}{[N+1]_q!}&1\\ 
\frac{[N]_q!}{[N+n]_q!}&\frac{[N]_q!}{[N+n-1]_q!}&\cdots&\frac{[N]_q!}{[N+2]_q!}&\frac{[N]_q!}{[N+1]_q!}
\end{array}\right|\,.  
$$
\label{th-det-mod-ber}
\end{theorem}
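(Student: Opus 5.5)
We obtain the determinant formula from the reciprocal coefficient identity in Theorem~\ref{th:det-cont}, specialized to $g_j=[N+j]_q$ and $h_j=1$. With this choice
$$
\frac{H_j}{G_j}=\frac{1}{[N+1]_q\cdots[N+j]_q}=\frac{[N]_q!}{[N+j]_q!}\qquad(j\ge 0),
$$
so $H_0/G_0=1$, as Theorem~\ref{th:det-cont} requires. Put $f_n:=\mathcal B_{N,n}/n!$. Then $f_0=\mathcal B_{N,0}=1$, since the series $\sum_j \frac{[N]_q!}{[N+j]_q!}x^j$ has constant term $1$. By the definition in Theorem~\ref{th:q-ber-cont}, $\sum_n f_nx^n$ is the reciprocal of $\sum_j \frac{H_j}{G_j}x^j$. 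This is exactly the first line of Theorem~\ref{th:det-cont}.

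The second equivalence of Theorem~\ref{th:det-cont} then gives $f_n=(-1)^n\det\bigl(a_{i-j+1}\bigr)$. This is the lower Hessenberg Toeplitz determinant with $a_k=[N]_q!/[N+k]_q!$ for $k\ge1$, ones on the superdiagonal and zeros above it. Multiplying by $n!$ gives the stated formula for $\mathcal B_{N,n}$.

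To keep the argument self-contained, as the paper does for Theorem~\ref{th:q-ber-cont}, I would also check the determinant identity directly. Comparing coefficients of $x^n$ in $\bigl(\sum_k f_kx^k\bigr)\bigl(\sum_j a_jx^j\bigr)=1$ (with $a_0=1$) gives
$$
\sum_{k=0}^n f_k\,a_{n-k}=0\qquad(n\ge1).
$$
Let $D_n$ be the determinant in the statement, with the convention $D_0=1$. Expand $D_n$ along its first column. The entry $a_k$ in row $k$ has a minor that is block triangular: the upper $(k-1)\times(k-1)$ block is lower unitriangular, so it contributes $1$, and the lower block is $D_{n-k}$. The sign is $(-1)^{k+1}$, so
$$
D_n=\sum_{k=1}^n(-1)^{k-1}a_kD_{n-k}.
$$
Now set $\tilde f_n=(-1)^nD_n$. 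The recurrence becomes $\tilde f_n=-\sum_{k=1}^n a_k\tilde f_{n-k}$, which is the same recurrence the $f_n$ satisfy, and $\tilde f_0=f_0=1$. Induction on $n$ then gives $f_n=\tilde f_n$.

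The one step needing care is the sign and the block structure of the minors in the first-column expansion. I would first confirm it for $n=1$ and $n=2$: $D_1=a_1$ and $D_2=a_1^2-a_2$. These match $f_1=-a_1$ and $f_2=a_1^2-a_2$ obtained directly from the reciprocal series. Everything else is bookkeeping.
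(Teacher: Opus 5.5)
Your proposal is correct and follows essentially the paper's route. The paper also proves this statement independently of Theorem~\ref{th:det-cont}: it combines the coefficient-comparison identity (its Lemma~\ref{rel-mod-ber}, i.e.\ your $\sum_{k=0}^n f_k a_{n-k}=0$) with the determinant recurrence $D_n=\sum_{k=1}^n(-1)^{k-1}a_kD_{n-k}$ and inducts on $n$. The only difference is that the paper gets this recurrence by expanding along the first row repeatedly, whereas your single first-column expansion with block-triangular minors gets it in one step, and your normalization $\tilde f_n=(-1)^nD_n$ keeps the signs cleaner than the paper's write-up.
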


We need the following relation.   

\begin{Lem}  
We have 
$$
\sum_{k=0}^n\frac{[N]_q!\mathcal B_{N,k}}{k![N+n-k]_q!}=\begin{cases}
0&\text{if $n\ge 1$},\\
1&\text{if $n=0$}\,. 
\end{cases}
$$
\label{rel-mod-ber}
\end{Lem}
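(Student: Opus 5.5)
The identity is a restatement of the defining relation of $\mathcal B_{N,n}$ as a reciprocal power series. The plan is to multiply the generating function by its reciprocal and compare coefficients of $x^n$.

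By the definition in Theorem~\ref{th:q-ber-cont},
$$
\left(\sum_{k=0}^\infty \mathcal B_{N,k}\frac{x^k}{k!}\right)\left(\sum_{j=0}^\infty\frac{[N]_q!}{[N+j]_q!}x^j\right)=1
$$
holds as an identity of formal power series in $x$. The second factor has constant term $[N]_q!/[N]_q!=1$, so it is invertible in the ring of formal power series. Hence the reciprocal is well defined, and the $\mathcal B_{N,k}$ are uniquely determined by this product identity.

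Expanding the left-hand side by the Cauchy product, the coefficient of $x^n$ is
$$
\sum_{k=0}^n \frac{\mathcal B_{N,k}}{k!}\cdot\frac{[N]_q!}{[N+n-k]_q!}.
$$
The coefficient of $x^n$ on the right-hand side is $1$ for $n=0$ and $0$ for $n\ge1$, which is the claimed identity.

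Alternatively, the lemma is the third equivalent form in Theorem~\ref{th:det-cont}, with $f_k=\mathcal B_{N,k}/k!$, $g_j=[N+j]_q$ and $h_j=1$. Then
$$
\frac{H_j}{G_j}=\frac{1}{[N+1]_q\cdots[N+j]_q}=\frac{[N]_q!}{[N+j]_q!},
$$
and the normalization $H_0/G_0=1$ holds. The only point needing care is to check that $f_0=\mathcal B_{N,0}=1$, which follows from the $n=0$ coefficient. No real obstacle is expected.
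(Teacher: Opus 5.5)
Your proposal is correct and follows the paper's own proof: multiply the generating function of $\mathcal B_{N,k}/k!$ by $\sum_{j\ge0}\frac{[N]_q!}{[N+j]_q!}x^j$, expand the product as a Cauchy product, and compare coefficients of $x^n$ with those of $1$. Your alternative reading as the third equivalent form of Theorem~\ref{th:det-cont}, with $f_k=\mathcal B_{N,k}/k!$, $g_j=[N+j]_q$ and $h_j=1$, is also valid and is exactly how the paper frames these $q$-families.
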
 
\begin{proof}  
By the definition, 
\begin{align*}
1&=\left(\sum_{k=0}^\infty\mathcal B_{N,k}\frac{x^k}{k!}\right)\left(\sum_{j=0}^\infty\frac{[N]_q!}{[N+j]_q!}x^j\right)\\ 
&=\sum_{n=0}^\infty\left(\sum_{k=0}^n\frac{\mathcal B_{N,k}}{k!}\frac{[N]_q!}{[N+n-k]_q!}\right)x^n\,. 
\end{align*}
Comparing the coefficients on both sides, we get the desired result. 
\end{proof} 

\begin{proof}[Proof of Theorem \ref{th-det-mod-ber}.]  
For simplicity, set  
\begin{equation}
\mathcal B_n':=\left|\begin{array}{ccccc}
\frac{[N]_q!}{[N+1]_q!}&1&0&&\\  
\frac{[N]_q!}{[N+2]_q!}&\frac{[N]_q!}{[N+1]_q!}&1&&\\
\vdots&\vdots&\ddots&1&0\\
\frac{[N]_q!}{[N+n-1]_q!}&\frac{[N]_q!}{[N+n-2]_q!}&\cdots&\frac{[N]_q!}{[N+1]_q!}&1\\ 
\frac{[N]_q!}{[N+n]_q!}&\frac{[N]_q!}{[N+n-1]_q!}&\cdots&\frac{[N]_q!}{[N+2]_q!}&\frac{[N]_q!}{[N+1]_q!}
\end{array}\right|\,. 
\label{det-bdash}
\end{equation} 
By the definition, 
\begin{multline*}
\left(\sum_{n=0}^\infty\frac{[N]_q!}{[N+n]_q!}x^n\right)^{-1}=1-\frac{[N]_q!}{[N+1]_q!}x+\left(\frac{[N]_q!^2}{[N+1]_q!^2}-\frac{[N]_q!}{[N+2]_q!}\right)x^2-\cdots\,. 
\end{multline*} 
Hence, $\mathcal B_1'=\frac{[N]_q!}{[N+1]_q!}$, so the identity in Theorem \ref{th-det-mod-ber} is valid for $n=1$ (we can also see that it is valid for $n=2$).   
Assume that the identity is valid up to a certain $n-1\ge 0$.  Then, expanding the determinant along the first row repeatedly, the right-hand side of (\ref{det-bdash}) is equal to  
\begin{align*}  
&\frac{\mathcal B_{n-1}'[N]_q!}{[N+1]_q!}-
\left|\begin{array}{ccccc}
\frac{[N]_q!}{[N+2]_q!}&1&0&&\\ 
\frac{[N]_q!}{[N+3]_q!}&\frac{[N]_q!}{[N+1]_q!}&&&\\  
\vdots&&\ddots&&0\\
\frac{[N]_q!}{[N+n-1]_q!}&&&\frac{[N]_q!}{[N+1]_q!}&1\\ 
\frac{[N]_q!}{[N+n]_q!}&\frac{[N]_q!}{[N+n-1]_q!}&\cdots&\cdots&\frac{[N]_q!}{[N+1]_q!}  
\end{array}\right|\\
&=\frac{\mathcal B_{n-1}'[N]_q!}{[N+1]_q!}-\frac{\mathcal B_{n-2}'[N]_q!}{[N+2]_q!}+\left|\begin{array}{ccccc}
\frac{[N]_q!}{[N+3]_q!}&1&0&&\\ 
\frac{[N]_q!}{[N+4]_q!}&\frac{[N]_q!}{[N+1]_q!}&&&\\  
\vdots&&\ddots&&0\\
\frac{[N]_q!}{[N+n-1]_q!}&&&\frac{1}{[N+1]_q!}&1\\ 
\frac{[N]_q!}{[N+n]_q!}&\frac{[N]_q!}{[N+n-1]_q!}&\cdots&\cdots&\frac{[N]_q!}{[N+1]_q!}  
\end{array}\right|\\
&=\cdots\\ 
&=\frac{\mathcal B_{n-1}'[N]_q!}{[N+1]_q!}-\frac{\mathcal B_{n-2}'[N]_q!}{[N+2]_q!}+\cdots
 +(-1)^{n}\left|\begin{array}{cc}
\frac{[N]_q!}{[N+n-1]_q!}&1\\
\frac{[N]_q!}{[N+n]_q!}&\frac{[N]_q!}{[N+1]_q!} 
\end{array}\right|\\
&=\sum_{k=0}^{n-1}\frac{(-1)^{n-k-1}\mathcal B_k'[N]_q!}{[N+n-k]_q!}=\mathcal B_n'\,.
\end{align*} 
In the last part, by Lemma \ref{rel-mod-ber}, we used the relation 
$$
\sum_{k=0}^n\frac{(-1)^{k-1}\mathcal B_k'[N]_q!}{[N+n-k]_q!}=\begin{cases}
0&\text{if $n\ge 1$},\\
1&\text{if $n=0$}\,. 
\end{cases} 
$$
\end{proof}

\subsection{$q$-Cauchy numbers}  

The $q$-logarithm function is defined by 
$$
{\rm Log}_q(1+z):=\sum_{n=1}^\infty\frac{(-1)^{n-1}z^n}{[n]_q}\,.  
$$
Then, the $q$-Cauchy numbers $\mathcal C_n=\mathcal C_{n,q}$ are introduced via the generating function 
\begin{equation}  
\sum_{k=0}^\infty\mathcal C_k\frac{x^k}{k!}=\frac{x}{{\rm Log}_q(1+x)}\,.  
\label{def:q-cau}
\end{equation}
More generally, {we consider} the $q$-hypergeometric Cauchy numbers $\mathcal C_{N,n}=\mathcal C_{N,n,q}$, 
which recover the $q$-Cauchy numbers $\mathcal C_n=\mathcal C_{1,n}$ 
when $N=1$.  {Thus the theorem above may be viewed as a 
$q$-generalization of the classical result.}
By applying Theorem \ref{th:det-cont} with $g_j=[N+j]_q$ and $h_j=-[N+j-1]_q$ ($j\ge 1$), we can get the results about $q$-Cauchy numbers.    

\begin{theorem}  
For $n\ge 1$, we have 
$$
\mathcal C_{N,n}=n!\left|\begin{array}{ccccc}
\frac{[N]_q}{[N+1]_q}&1&0&&\\  
\frac{[N]_q}{[N+2]_q}&\frac{[N]_q}{[N+1]_q}&1&&\\
\vdots&\vdots&\ddots&1&0\\
\frac{[N]_q}{[N+n-1]_q}&\frac{[N]_q}{[N+n-2]_q}&\cdots&\frac{[N]_q}{[N+1]_q}&1\\ 
\frac{[N]_q}{[N+n]_q}&\frac{[N]_q}{[N+n-1]_q}&\cdots&\frac{[N]_q}{[N+2]_q}&\frac{[N]_q}{[N+1]_q}
\end{array}\right|\,.  
$$
\label{th-det-cau}
\end{theorem}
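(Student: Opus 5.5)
The plan is to obtain Theorem \ref{th-det-cau} as a direct specialization of Theorem \ref{th:det-cont}, followed by a sign-removal argument for the determinant. I take the definition of $\mathcal C_{N,n}$ to be the one indicated in the text, namely
$$
\sum_{k=0}^\infty\mathcal C_{N,k}\frac{x^k}{k!}=\left(\sum_{j=0}^\infty\frac{(-1)^j[N]_q}{[N+j]_q}x^j\right)^{-1}.
$$
For $N=1$ this is $x/{\rm Log}_q(1+x)$, because $x^{-1}{\rm Log}_q(1+x)=\sum_{j\ge0}(-1)^j x^j/[j+1]_q$.

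First I compute the data. With $g_j=[N+j]_q$ and $h_j=-[N+j-1]_q$ the product telescopes:
$$
\frac{H_j}{G_j}=\prod_{i=1}^j\frac{-[N+i-1]_q}{[N+i]_q}=(-1)^j\frac{[N]_q}{[N+j]_q},
$$
and $H_0/G_0=1$ holds as Theorem \ref{th:det-cont} requires. Setting $f_n=\mathcal C_{N,n}/n!$ (so $f_0=1$), Theorem \ref{th:det-cont} gives
$$
f_n=(-1)^n\det\bigl(a_{i,j}\bigr)_{1\le i,j\le n}.
$$
Here $a_{i,j}=H_{i-j+1}/G_{i-j+1}=(-1)^{i-j+1}[N]_q/[N+i-j+1]_q$ for $i\ge j$, $a_{i,i+1}=1$, and all other entries vanish. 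Equivalently, one could use Lemma-type reasoning as in Lemma \ref{rel-mod-ber}: the coefficient identity $\sum_{k}f_kH_{n-k}/G_{n-k}=\delta_{n,0}$, followed by induction with first-row expansion exactly as in the proof of Theorem \ref{th-det-mod-ber}. Citing Theorem \ref{th:det-cont} is shorter.

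Next I remove the signs by conjugating with $D={\rm diag}((-1)^1,\dots,(-1)^n)$, which leaves the determinant unchanged. Conjugation multiplies entry $(i,j)$ by $(-1)^{i-j}$. For an entry with $k=i-j+1\ge1$ the sign becomes $(-1)^k(-1)^{k-1}=-1$. The superdiagonal entries ($k=0$) also become $-1$. Hence $DAD^{-1}=-M$, where $M$ is exactly the matrix displayed in Theorem \ref{th-det-cau}, with positive entries $[N]_q/[N+k]_q$ and $1$'s on the superdiagonal. So $\det A=(-1)^n\det M$, and therefore
$$
f_n=(-1)^n(-1)^n\det M=\det M,
$$
which gives $\mathcal C_{N,n}=n!\det M$.

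The only delicate point is this sign bookkeeping: checking that the alternating signs of $H_j/G_j$ and the superdiagonal $1$'s flip uniformly under the conjugation, so that the prefactor $(-1)^n$ is exactly cancelled. A sanity check at $n=1$ supports the result: $f_1=-H_1/G_1=[N]_q/[N+1]_q$, which for $N=1$, $q\to1$ gives $c_1=1/2$.
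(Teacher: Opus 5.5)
Your proposal is correct and follows the paper's proof: specialize Theorem~\ref{th:det-cont} with $g_j=[N+j]_q$ and $h_j=-[N+j-1]_q$, so that $H_j/G_j=(-1)^j[N]_q/[N+j]_q$. Then multiply row $i$ and column $j$ by $(-1)^i$ and $(-1)^j$ (your conjugation by $D$); this turns the matrix into $-M$ and cancels the prefactor $(-1)^n$, which is the sign bookkeeping the paper states in one sentence and you write out explicitly.
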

\begin{proof}
By Theorem \ref{th:det-cont}, we have 
{\small  
$$
\mathcal C_{N,n}=n!(-1)^n\left|\begin{array}{ccccc}
-\frac{[N]_q}{[N+1]_q}&1&0&&\\  
\frac{[N]_q}{[N+2]_q}&-\frac{[N]_q}{[N+1]_q}&1&&\\
\vdots&\vdots&\ddots&1&0\\
(-1)^{n-1}\frac{[N]_q}{[N+n-1]_q}&(-1)^{n-2}\frac{[N]_q}{[N+n-2]_q}&\cdots&-\frac{[N]_q}{[N+1]_q}&1\\ 
(-1)^n\frac{[N]_q}{[N+n]_q}&(-1)^{n-1}\frac{[N]_q}{[N+n-1]_q}&\cdots&\frac{[N]_q}{[N+2]_q}&-\frac{[N]_q}{[N+1]_q}
\end{array}\right|\,.  
$$ 
} 
Factoring the alternating signs from rows and columns removes the factor $(-1)^n$, and we get the desired result.  
\end{proof}

\begin{Lem}  
We have 
$$
\sum_{k=0}^n(-1)^{n-k}\frac{\mathcal C_{N,k}}{k![N+n-k]_q}=\begin{cases}
0&\text{if $n\ge 1$},\\
1&\text{if $n=0$}\,. 
\end{cases}
$$
\label{rel-cau}
\end{Lem}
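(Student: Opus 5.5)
The plan is to copy the proof of Lemma \ref{rel-mod-ber}: write the defining generating-function relation for $\mathcal C_{N,n}$ as a product equal to $1$, expand that product as a Cauchy product, and compare coefficients. I expect this to be the third equivalence of Theorem \ref{th:det-cont} specialised to $g_j=[N+j]_q$ and $h_j=-[N+j-1]_q$.

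First I compute the coefficient sequence with these choices. The product telescopes:
$$
\frac{H_j}{G_j}=\prod_{i=1}^{j}\frac{-[N+i-1]_q}{[N+i]_q}=(-1)^j\frac{[N]_q}{[N+j]_q}\qquad(j\ge 0),
$$
and this agrees with the entries of the determinant in Theorem \ref{th-det-cau} up to the alternating signs. The generating function of the $q$-hypergeometric Cauchy numbers is therefore the reciprocal of $\sum_{j\ge0}(-1)^j x^j/[N+j]_q$, up to the constant factor $[N]_q$. For $N=1$ this series is ${\rm Log}_q(1+x)/x$, consistent with (\ref{def:q-cau}).

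Next I write the identity
$$
\left(\sum_{k=0}^\infty \mathcal C_{N,k}\frac{x^k}{k!}\right)\left(\sum_{j=0}^\infty \frac{(-1)^j x^j}{[N+j]_q}\right)=1 .
$$
Expanding the Cauchy product gives $\sum_{n\ge0}\Bigl(\sum_{k=0}^n (-1)^{n-k}\mathcal C_{N,k}/(k!\,[N+n-k]_q)\Bigr)x^n=1$. Comparing coefficients of $x^n$ then yields $1$ for $n=0$ and $0$ for $n\ge1$, which is the claim. Equivalently, this is the relation $\sum_k f_k H_{n-k}/G_{n-k}=\delta_{n,0}$ of Theorem \ref{th:det-cont} with $f_k=\mathcal C_{N,k}/k!$, after clearing the common factor $[N]_q$.

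The only delicate point is this normalisation. The $n=0$ case of the lemma forces $\mathcal C_{N,0}=[N]_q$. That holds exactly when the generating function is taken as $\bigl(\sum_j(-1)^jx^j/[N+j]_q\bigr)^{-1}$, rather than $[N]_q$-normalised so that $\mathcal C_{N,0}=1$. I will fix the definition in this form, which reduces to (\ref{def:q-cau}) at $N=1$, and check that it is compatible with Theorem \ref{th-det-cau}. If instead the definition is normalised so that $\mathcal C_{N,0}=1$, the same computation goes through verbatim, but the right-hand side becomes $1/[N]_q$ at $n=0$. In either case the vanishing for $n\ge1$, which is the substantive part, is unaffected. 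Beyond this bookkeeping I expect no real obstacle, since everything reduces to a coefficient comparison.
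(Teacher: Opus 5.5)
Your argument is the one the paper intends. No separate proof is given there: the lemma is meant to be the third equivalence of Theorem~\ref{th:det-cont} with $g_j=[N+j]_q$, $h_j=-[N+j-1]_q$, i.e.\ the same Cauchy-product coefficient comparison used for Lemma~\ref{rel-mod-ber}. Your value $H_j/G_j=(-1)^j[N]_q/[N+j]_q$ and the vanishing for $n\ge 1$ are correct.

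One correction to your normalization remark: the convention $\mathcal C_{N,0}=[N]_q$ is \emph{not} compatible with Theorem~\ref{th-det-cau}, so the check you promise would fail. That convention rescales every $\mathcal C_{N,n}$ by $[N]_q$. The determinant formula would then be off by this factor whenever $[N]_q\ne 1$, and so would the continued fraction of Theorem~\ref{th:q-cau-cont}, whose constant term is $1$.

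The paper's convention is $\mathcal C_{N,0}=f_0=1$, so your second alternative is the consistent one. Under it the sum is $0$ for $n\ge 1$ and equals $1/[N]_q$ at $n=0$. The printed $n=0$ case is therefore exact only when $[N]_q=1$ (e.g.\ $N=1$); in general the summand should carry a factor $[N]_q$, as in Lemma~\ref{rel-mod-ber}.
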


The $q$-hypergeometric Cauchy numbers $\mathcal C_{N,n}$ have an explicit continued fraction expansion.   

\begin{theorem}  
We have 
\begin{multline}
\sum_{k=0}^\infty\mathcal C_{N,k}\frac{x^k}{k!}
=1+\cfrac{[N]_q x}{[N+1]_q-[N]_q x+\cfrac{[N+1]_q^2 x}{[N+2]_q-[N+1]_q x+\cfrac{[N+2]_q^2 x}{[N+3]_q-[N+2]_q x-\ddots}}}\, 
\label{eq:q-cau-cont}
\end{multline} 
\label{th:q-cau-cont}
\end{theorem}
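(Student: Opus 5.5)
Presumably the $q$-hypergeometric Cauchy numbers are defined by
$$
\sum_{k=0}^\infty\mathcal C_{N,k}\frac{x^k}{k!}=\left(\sum_{j=0}^\infty\frac{(-1)^j[N]_q}{[N+j]_q}x^j\right)^{-1},
$$
which is consistent with Lemma~\ref{rel-cau} and with Theorem~\ref{th-det-cau}. The plan is to specialize Theorem~\ref{th:det-cont} with $g_j=[N+j]_q$ and $h_j=-[N+j-1]_q$. The product then telescopes:
$$
\frac{H_j}{G_j}=\frac{h_1\cdots h_j}{g_1\cdots g_j}=(-1)^j\frac{[N]_q[N+1]_q\cdots[N+j-1]_q}{[N+1]_q\cdots[N+j]_q}=(-1)^j\frac{[N]_q}{[N+j]_q}.
$$
So the left-hand side of (\ref{eq:q-cau-cont}) is exactly $\bigl(\sum_j H_jx^j/G_j\bigr)^{-1}$.

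Next, substitute into the continued fraction of Theorem~\ref{th:det-cont}. The first numerator is $-h_1x=[N]_qx$. The $j$-th partial denominator is $g_j+h_jx=[N+j]_q-[N+j-1]_qx$. For the later numerators, $-g_{j-1}h_jx=[N+j-1]_q[N+j-1]_qx=[N+j-1]_q^2x$, so each minus sign becomes a plus. This reproduces (\ref{eq:q-cau-cont}), where the printed final ``$-\ddots$'' should be read as ``$+\ddots$''.

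To keep the argument self-contained, I would mirror the proof of Theorem~\ref{th:q-ber-cont} and define
$$
P_n(x)=[N+1]_q\cdots[N+n]_q,\qquad Q_n(x)=P_n(x)\sum_{k=0}^n\frac{(-1)^k[N]_q}{[N+k]_q}x^k .
$$
The key step is to check that both sequences satisfy
$$
Y_n=([N+n]_q-[N+n-1]_qx)\,Y_{n-1}+[N+n-1]_q^2x\,Y_{n-2}\qquad(n\ge2).
$$
The initial values are $P_0=Q_0=1$, $P_1=[N+1]_q$ and $Q_1=[N+1]_q-[N]_qx$. For $P_n$ the recurrence is immediate, since the two $x$-terms cancel: $-[N+n-1]_qx\,P_{n-1}+[N+n-1]_q^2x\,P_{n-2}=0$ because $P_{n-1}=[N+n-1]_qP_{n-2}$. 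For $Q_n$, write $Q_n=P_nS_n$ with $S_n$ the partial sum. After dividing by $P_{n-2}$, the identity to verify reduces to
$$
[N+n]_q[N+n-1]_q(S_n-S_{n-1})=-[N+n-1]_q^2x\,(S_{n-1}-S_{n-2}).
$$
This follows from $S_n-S_{n-1}=(-1)^n[N]_qx^n/[N+n]_q$.

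Finally, by the approximation property \cite[p.~696]{Ko20}, $P_n/Q_n$ agrees with $\bigl(\sum_j(-1)^j[N]_qx^j/[N+j]_q\bigr)^{-1}$ modulo $x^{n+1}$; this is immediate here because $Q_n/P_n=S_n$. Letting $n\to\infty$ gives the theorem. The only delicate point is the sign bookkeeping in the $Q_n$ recurrence, which is the computation above.
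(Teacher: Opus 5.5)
Your proposal is correct and follows the paper's own route. The paper obtains this theorem by specializing Theorem~\ref{th:det-cont} to $g_j=[N+j]_q$, $h_j=-[N+j-1]_q$, which gives $H_j/G_j=(-1)^j[N]_q/[N+j]_q$; your convergent argument with $P_n=[N+1]_q\cdots[N+n]_q$ and $Q_n=P_nS_n$ is a sound, self-contained version of the argument written out for Theorem~\ref{th:q-ber-cont}. Your reading of the final $-\ddots$ as $+\ddots$ is also right, as comparing the case $q\to 1$, $N=1$ with (\ref{j-cont-cauchy}) confirms.
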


\noindent 
{\it Remark.}  
When $q\to 1$ and $N=1$ in Theorem \ref{th:q-cau-cont}, we get the continued fraction expansion of the generating function of the classical Cauchy numbers (see, e.g., \cite[(8)]{Ko20},\cite[Chapter 8]{Loya}).

\subsection{$q$-Euler numbers}   

For $N\ge 0$, the $q$-hypergeometric Euler numbers $\mathcal E_{N,n}=\mathcal E_{N,n,q}$ are introduced as the generating function 
\begin{equation}  
\sum_{k=0}^\infty\mathcal E_{N,k}\frac{x^k}{k!}=\left(\sum_{n=0}^\infty\frac{[2 N]_q! x^{2 n}}{[2 N+2 n]_q!}\right)^{-1}\,.  
\label{def:q-eul}
\end{equation}  
Note that when $q\to 1$, $E_{N,n}=\mathcal E_{N,n,1}$ are hypergeometric Euler numbers \cite{KZ}. When $N=0$ and $q\to 1$, $E_n=\mathcal E_{0,n,1}$ are the classical Euler numbers because the right-hand side of (\ref{def:q-eul}) becomes $1/\cosh x$.  

Many different kinds of $q$-Euler numbers and their generalizations have been introduced (see, e.g., \cite{KimLee13}).   
However, the $q$-Euler numbers introduced in this way (\ref{def:q-eul}) have the advantage of admitting elegant determinant expressions and continued fraction expansions.   

By applying Theorem \ref{th:det-cont} with $g_j=[2 N+2 j-1]_q[2 N+2 j]_q$ and $h_j=1$ ($j\ge 1$), with $x$ being replaced by $x^2$, we can get the results about $q$-hypergeometric Euler numbers.  
The determinant of the $q$-hypergeometric Euler numbers is given as follows.  

\begin{theorem}  
For $N\ge 0$ and $n\ge 1$, we have 
$$
\mathcal E_{N,2 n}=(-1)^n(2 n)!\left|\begin{array}{ccccc}
\frac{[2 N]_q!}{[2 N+2]_q!}&1&0&&\\  
\frac{[2 N]_q!}{[2 N+4]_q!}&\frac{[2 N]_q!}{[2 N+2]_q!}&1&&\\
\vdots&\vdots&\ddots&1&0\\
\frac{[2 N]_q!}{[2 N+2 n-2]_q!}&\frac{[2 N]_q!}{[2 N+2 n-4]_q!}&\cdots&\frac{[2 N]_q!}{[2 N+2]_q!}&1\\ 
\frac{[2 N]_q!}{[2 N+2 n]_q!}&\frac{[2 N]_q!}{[2 N+2 n-2]_q!}&\cdots&\frac{[2 N]_q!}{[2 N+4]_q!}&\frac{[2 N]_q!}{[2 N+2]_q!}
\end{array}\right|\,.  
$$
\label{th-det-eul}
\end{theorem}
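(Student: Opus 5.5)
The plan is to apply Theorem~\ref{th:det-cont} in the variable $y=x^2$, exactly as for the $q$-Bernoulli determinant. Put $g_j=[2N+2j-1]_q[2N+2j]_q$ and $h_j=1$ for $j\ge1$. Then
\[
\frac{H_j}{G_j}=\frac{1}{g_1\cdots g_j}=\frac{[2N]_q!}{[2N+2j]_q!},
\]
because the product $g_1\cdots g_j$ telescopes to $[2N+1]_q[2N+2]_q\cdots[2N+2j]_q=[2N+2j]_q!/[2N]_q!$. With this choice the series $\sum_j \frac{H_j}{G_j}y^j$ is precisely the series inverted in (\ref{def:q-eul}).

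Next we identify the $f_n$ of Theorem~\ref{th:det-cont}, defined by $\sum_n f_n y^n=\bigl(\sum_j \frac{H_j}{G_j}y^j\bigr)^{-1}$. The left-hand side of (\ref{def:q-eul}) is a power series in $x$. It equals a power series in $x^2$, so all odd coefficients vanish: $\mathcal E_{N,2n+1}=0$. Comparing coefficients of $x^{2n}$ then gives $f_n=\mathcal E_{N,2n}/(2n)!$, with $f_0=1$ as required. A cleaner way to see both facts is that the inverse of a series in $y$ is again a series in $y$, and substituting $y=x^2$ is an injective ring homomorphism of formal power series.

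The determinant part of Theorem~\ref{th:det-cont} now reads
\[
f_n=(-1)^n\det\bigl(H_{i-j+1}/G_{i-j+1}\bigr)_{1\le i,j\le n},
\]
where entries with $i-j+1=0$ equal $1$ and those with $i-j+1<0$ vanish. The entry in row $i$, column $j$ with $i\ge j$ is $[2N]_q!/[2N+2(i-j+1)]_q!$, which matches the displayed matrix. Multiplying by $(2n)!$ gives the claim.

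Alternatively, one can argue independently, following the proof of Theorem~\ref{th-det-mod-ber}. Multiplying out (\ref{def:q-eul}) gives the analogue of Lemma~\ref{rel-mod-ber}:
\[
\sum_{k=0}^n \frac{\mathcal E_{N,2k}}{(2k)!}\cdot\frac{[2N]_q!}{[2N+2n-2k]_q!}=\delta_{n,0}.
\]
An induction with repeated first-row expansion then yields the determinant. The only point needing care is the bookkeeping that turns the $x$-expansion into the $y=x^2$ expansion, in particular the vanishing of the odd-index terms. Once that is justified, the rest is a direct specialization, and I do not expect any genuine obstacle.
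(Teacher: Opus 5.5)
Your proposal is correct and follows the paper's own route: the paper obtains this theorem by specializing Theorem~\ref{th:det-cont} with $g_j=[2N+2j-1]_q[2N+2j]_q$, $h_j=1$ and $x$ replaced by $x^2$. This gives $H_j/G_j=[2N]_q!/[2N+2j]_q!$ and $f_n=\mathcal E_{N,2n}/(2n)!$, exactly as you set it up; your explicit justification of the $y=x^2$ bookkeeping (evenness of the series, so the odd-index terms vanish) is a detail the paper leaves implicit.
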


When $q\to 1$, Theorem \ref{th-det-eul} reduces to \cite[Proposition 2]{KZ}.  
When $N=0$, we have the determinant expression of $q$-Euler numbers $\mathcal E_{k}$.  

\begin{Cor}   
For $n\ge 1$, we have 
$$
\mathcal E_{2 n}=(-1)^n(2 n)!\left|\begin{array}{ccccc}
\frac{1}{[2]_q!}&1&0&&\\  
\frac{1}{[4]_q!}&\frac{1}{[2]_q!}&1&&\\
\vdots&\vdots&\ddots&1&0\\
\frac{1}{[2 n-2]_q!}&\frac{1}{[2 n-4]_q!}&\cdots&\frac{1}{[2]_q!}&1\\ 
\frac{1}{[2 n]_q!}&\frac{1}{[2 n-2]_q!}&\cdots&\frac{1}{[4]_q!}&\frac{1}{[2]_q!}
\end{array}\right|\,.  
$$
\end{Cor}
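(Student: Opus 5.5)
The corollary is the case $N=0$ of Theorem \ref{th-det-eul}, so the plan is to specialize that theorem rather than prove anything new. With $N=0$ we have $[2N]_q!=[0]_q!=1$, so every entry $\frac{[2N]_q!}{[2N+2k]_q!}$ becomes $\frac{1}{[2k]_q!}$. The Toeplitz--Hessenberg determinant in Theorem \ref{th-det-eul} then turns entrywise into the one in the statement. The prefactor $(-1)^n(2n)!$ is unchanged, and $\mathcal E_{0,2n}=\mathcal E_{2n}$ by definition.

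The only step needing care is that Theorem \ref{th-det-eul} really applies at $N=0$. Its derivation from Theorem \ref{th:det-cont} uses $g_j=[2N+2j-1]_q[2N+2j]_q$, $h_j=1$, and $x^2$ in place of $x$. For $N=0$ this gives $g_j=[2j-1]_q[2j]_q$, so
$$
G_j=g_1\cdots g_j=[2j]_q!\ne 0
$$
for $j\ge1$ (generic $q$), and $H_j=1$. Hence $H_j/G_j=1/[2j]_q!$, and the series $\sum_j (H_j/G_j)x^{2j}$ is exactly the reciprocal of the generating function in (\ref{def:q-eul}) at $N=0$. So nothing degenerates.

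To be self-contained, I would also run the argument directly. Write
$$
\sum_{k\ge0}\mathcal E_{2k}\frac{x^{2k}}{(2k)!}=\Bigl(\sum_{j\ge0}\frac{x^{2j}}{[2j]_q!}\Bigr)^{-1},
$$
noting that the odd coefficients vanish because the right-hand side is even. Set $y=x^2$ and $f_k=\mathcal E_{2k}/(2k)!$. The determinant equivalence in Theorem \ref{th:det-cont} then gives
$$
f_n=(-1)^n\det\bigl(H_{i-j+1}/G_{i-j+1}\bigr),
$$
where entries with $i-j+1=0$ are $1$ and those with $i-j+1<0$ are $0$. Multiplying by $(2n)!$ yields the claim. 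I expect no real obstacle beyond checking the indexing and that $[0]_q!=1$.
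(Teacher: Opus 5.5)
Your proposal is correct and follows the paper's own route. The paper obtains this corollary by setting $N=0$ in Theorem~\ref{th-det-eul}, using $[0]_q!=1$. That theorem is in turn the specialization of Theorem~\ref{th:det-cont} with $g_j=[2N+2j-1]_q[2N+2j]_q$, $h_j=1$ and $x$ replaced by $x^2$, which is exactly the direct argument you sketch. Your extra check that $G_j=[2j]_q!$ is harmless but not needed, since Theorem~\ref{th-det-eul} is already stated for $N\ge 0$.
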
 

Further, taking $q\to 1$ yields the determinant expression of Euler numbers $E_n$ \cite[p.~52]{Glaisher}.


\begin{Lem}  
We have 
$$
\sum_{k=0}^n\frac{[2 N]_q!\mathcal E_{N,2 k}}{(2 k)![2 N+2 n-2 k]_q!}=\begin{cases}
0&\text{if $n\ge 1$},\\
1&\text{if $n=0$}\,. 
\end{cases}
$$
\label{rel-eul}
\end{Lem}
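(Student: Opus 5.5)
The plan is to follow the proof of Lemma \ref{rel-mod-ber}: multiply the generating function (\ref{def:q-eul}) by its reciprocal series and compare coefficients. One preliminary point needs care. The series $\sum_n \frac{[2N]_q!}{[2N+2n]_q!}x^{2n}$ is even in $x$, with constant term $1$. Its formal reciprocal is therefore also an even power series. Hence $\mathcal E_{N,k}=0$ for all odd $k$, and
$$
\sum_{k=0}^\infty\mathcal E_{N,k}\frac{x^k}{k!}=\sum_{k=0}^\infty\mathcal E_{N,2k}\frac{x^{2k}}{(2k)!}\,.
$$
I will state this explicitly, because the lemma only involves the even-indexed numbers.

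Next, I will write
\begin{align*}
1&=\left(\sum_{k=0}^\infty\mathcal E_{N,2k}\frac{x^{2k}}{(2k)!}\right)\left(\sum_{j=0}^\infty\frac{[2N]_q!}{[2N+2j]_q!}x^{2j}\right)\\
&=\sum_{n=0}^\infty\left(\sum_{k=0}^n\frac{\mathcal E_{N,2k}}{(2k)!}\frac{[2N]_q!}{[2N+2n-2k]_q!}\right)x^{2n}\,,
\end{align*}
which is the Cauchy product of two series in $x^2$. Comparing the coefficients of $x^{2n}$ gives $1$ for $n=0$ and $0$ for $n\ge1$. That is exactly the claimed identity. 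For $n=0$, the identity reads $\mathcal E_{N,0}=1$, which is consistent with the constant term of the reciprocal.

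Equivalently, the identity is the third equivalence of Theorem \ref{th:det-cont}. To see this, take $x\mapsto x^2$, $g_j=[2N+2j-1]_q[2N+2j]_q$ and $h_j=1$, so that $H_j/G_j=[2N]_q!/[2N+2j]_q!$ and $f_n=\mathcal E_{N,2n}/(2n)!$.

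I see no real obstacle here. The only point to justify is the vanishing of the odd coefficients, which makes the substitution $x\mapsto x^2$ legitimate and the indexing by $2k$ correct.
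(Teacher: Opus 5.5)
Your proposal is correct and follows the paper's route. The paper states this lemma without proof, but it is the same coefficient comparison in $1=\bigl(\sum_k \mathcal E_{N,k}x^k/k!\bigr)\bigl(\sum_j [2N]_q!\,x^{2j}/[2N+2j]_q!\bigr)$ used to prove Lemma~\ref{rel-mod-ber}, or equivalently the third equivalence of Theorem~\ref{th:det-cont} with $x\mapsto x^2$, $g_j=[2N+2j-1]_q[2N+2j]_q$, $h_j=1$. Your parity remark is harmless but not needed for the direct comparison, since only even $k$ can pair with the even powers $x^{2j}$ to give $x^{2n}$; it matters only for identifying $f_n=\mathcal E_{N,2n}/(2n)!$ in Theorem~\ref{th:det-cont}.
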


The $q$-hypergeometric Euler numbers $\mathcal E_{N,n}$ defined in (\ref{def:q-eul}) have the following continued fraction expansion.   

\begin{theorem}  
We have 
{\small 
\begin{align*}
&\sum_{k=0}^\infty\mathcal E_{N, k}\frac{x^k}{k!}\\
&=1-\cfrac{x^2}{[2 N+1]_q[2 N+2]_q+x^2-\cfrac{[2 N+1]_q[2 N+2]_q x^2}{[2 N+3]_q[2 N+4]_q+x^2-\cfrac{[2 N+3]_q[2 N+4]_q x^2}{[2 N+5]_q[2 N+6]_q {+ x^2}-\ddots}}}\, 
\end{align*} 
} 
\label{th:q-eul-cont}
\end{theorem}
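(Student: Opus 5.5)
The plan is to specialize the continued-fraction part of Theorem~\ref{th:det-cont}, replacing $x$ by $x^2$. Take
$$
g_j=[2N+2j-1]_q[2N+2j]_q,\qquad h_j=1\qquad (j\ge 1).
$$
Then
$$
\frac{H_j}{G_j}=\frac{1}{g_1\cdots g_j}=\frac{[2N]_q!}{[2N+2j]_q!},
$$
because $g_1\cdots g_j=[2N+1]_q[2N+2]_q\cdots[2N+2j-1]_q[2N+2j]_q=[2N+2j]_q!/[2N]_q!$. Hence, by the definition (\ref{def:q-eul}), the series $\sum_k\mathcal E_{N,k}x^k/k!$ is exactly $\bigl(\sum_{j\ge0}\frac{H_j}{G_j}(x^2)^j\bigr)^{-1}$. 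Theorem~\ref{th:det-cont} then says this reciprocal equals
$$
1-\cfrac{h_1x^2}{g_1+h_1x^2-\cfrac{g_1h_2x^2}{g_2+h_2x^2-\ddots}}.
$$
Substituting $h_j=1$ and the chosen $g_j$ gives the stated continued fraction term by term. The partial numerators are $x^2,\ [2N+1]_q[2N+2]_q x^2,\ [2N+3]_q[2N+4]_q x^2,\dots$, and the partial denominators are $[2N+2j-1]_q[2N+2j]_q+x^2$. The odd coefficients $\mathcal E_{N,2k+1}$ vanish, since the right side is a function of $x^2$.

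To make the argument self-contained, mirroring the proof of Theorem~\ref{th:q-ber-cont}, I would set
$$
P_n(x):=\frac{[2N+2n]_q!}{[2N]_q!},\qquad Q_n(x):=[2N+2n]_q!\sum_{k=0}^n\frac{x^{2k}}{[2N+2k]_q!}.
$$
The next step is to verify the common three-term recurrence
$$
Y_n=(g_n+x^2)Y_{n-1}-g_{n-1}x^2Y_{n-2}\qquad(n\ge 2),
$$
with initial values $P_0=Q_0=1$, $P_1=g_1$, and $Q_1=g_1+x^2$.

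For $P_n$ the check is immediate: $P_n=g_nP_{n-1}$, so $(g_n+x^2)P_{n-1}-g_{n-1}x^2P_{n-2}=g_nP_{n-1}+x^2(P_{n-1}-g_{n-1}P_{n-2})=g_nP_{n-1}$.

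For $Q_n$, write $Q_n=g_nQ_{n-1}+x^{2n}$. Then the right side of the recurrence equals $g_nQ_{n-1}+x^2(Q_{n-1}-g_{n-1}Q_{n-2})=g_nQ_{n-1}+x^2\cdot x^{2n-2}$, which is $Q_n$.

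Consequently $P_n/Q_n$ is the $n$-th convergent of the stated continued fraction.

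Finally, $P_n/Q_n=\bigl(\sum_{k=0}^n\frac{[2N]_q!}{[2N+2k]_q!}x^{2k}\bigr)^{-1}$ agrees with the generating function modulo $x^{2n+2}$. Letting $n\to\infty$ in the formal power series sense gives the identity.

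The only delicate point is the bookkeeping: the product $g_1\cdots g_j$ must telescope to $[2N+2j]_q!/[2N]_q!$, and the partial numerators must be indexed correctly, as $g_{j-1}h_jx^2$ rather than $g_jh_jx^2$. Beyond that, the result is a routine specialization of Theorem~\ref{th:det-cont}.
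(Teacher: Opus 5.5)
Your proposal is correct and follows the paper's own route. The paper obtains this theorem by specializing Theorem~\ref{th:det-cont} with $g_j=[2N+2j-1]_q[2N+2j]_q$, $h_j=1$ and $x$ replaced by $x^2$, and your convergent check ($P_n=g_nP_{n-1}$, $Q_n=g_nQ_{n-1}+x^{2n}$, with the telescoping $g_1\cdots g_j=[2N+2j]_q!/[2N]_q!$ and partial numerators $g_{j-1}x^2$) is the argument the paper writes out for Theorem~\ref{th:q-ber-cont}, correctly adapted to this case.
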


\noindent 
{\it Remark.}  
When $q\to 1$ in Theorem \ref{th:q-eul-cont}, we have \cite[Theorem 3]{Ko20}.  
\bigskip 

For $N\ge 0$, the $q$-hypergeometric Euler numbers of the second kind ($q$-complementary Euler numbers) $\mathcal{\widehat E}_{N,n}=\mathcal {\widehat E}_{N,n,q}$ are introduced as the generating function 
\begin{equation}  
\sum_{k=0}^\infty\mathcal{\widehat E}_{N,k}\frac{x^k}{k!}=\left(\sum_{n=0}^\infty\frac{[2 N+1]_q! x^{2 n}}{[2 N+2 n+1]_q!}\right)^{-1}\,.  
\label{def:q-eul2}
\end{equation}  
When $q\to 1$, $\widehat E_{N,n}=\mathcal{\widehat E}_{N,n,1}$ are hypergeometric Euler numbers of the second kind \cite{KZ}. When $N=0$ and $q\to 1$, $\widehat E_n=\mathcal{\widehat E}_{0,n,1}$ are the classical Euler numbers of the second kind (complementary Euler numbers) because the right-hand side of (\ref{def:q-eul2}) becomes $x/\sinh x$.  
Then, similarly to Theorem \ref{th-det-eul}, we have the determinant expression.  
\begin{theorem}  
For $N\ge 0$ and $n\ge 1$, we have 
$$
\mathcal{\widehat E}_{N,2 n}=(-1)^n(2 n)!\left|\begin{array}{ccccc}
\frac{[2 N+1]_q!}{[2 N+3]_q!}&1&0&&\\  
\frac{[2 N+1]_q!}{[2 N+5]_q!}&\frac{[2 N+1]_q!}{[2 N+3]_q!}&1&&\\
\vdots&\vdots&\ddots&1&0\\
\frac{[2 N+1]_q!}{[2 N+2 n-1]_q!}&\frac{[2 N+1]_q!}{[2 N+2 n-3]_q!}&\cdots&\frac{[2 N+1]_q!}{[2 N+3]_q!}&1\\ 
\frac{[2 N+1]_q!}{[2 N+2 n+1]_q!}&\frac{[2 N+1]_q!}{[2 N+2 n-1]_q!}&\cdots&\frac{[2 N+1]_q!}{[2 N+5]_q!}&\frac{[2 N+1]_q!}{[2 N+3]_q!}
\end{array}\right|\,.  
$$
\label{th-det-eul2}
\end{theorem}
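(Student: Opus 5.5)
The plan is to apply Theorem \ref{th:det-cont} with $x$ replaced by $x^2$, exactly as for Theorem \ref{th-det-eul}. We choose
$$
g_j=[2N+2j]_q[2N+2j+1]_q,\qquad h_j=1\qquad(j\ge 1).
$$
Then
$$
\frac{H_j}{G_j}=\frac{1}{g_1\cdots g_j}=\frac{[2N+1]_q!}{[2N+2j+1]_q!},
$$
because the product $g_1\cdots g_j$ telescopes to $[2N+2]_q[2N+3]_q\cdots[2N+2j+1]_q=[2N+2j+1]_q!/[2N+1]_q!$. In particular $H_0/G_0=1$, and the series inverted in (\ref{def:q-eul2}) is exactly $\sum_{j\ge0}\frac{H_j}{G_j}(x^2)^j$.

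Next we set $f_n:=\mathcal{\widehat E}_{N,2n}/(2n)!$. The generating function in (\ref{def:q-eul2}) is a power series in $x^2$ with constant term $1$, and so is its reciprocal. Hence $\mathcal{\widehat E}_{N,k}=0$ for odd $k$, $f_0=1$, and
$$
\sum_{n\ge0}f_n y^n=\Bigl(\sum_{j\ge0}\frac{H_j}{G_j}y^j\Bigr)^{-1},\qquad y=x^2 .
$$
This is precisely the first line of Theorem \ref{th:det-cont} in the variable $y$.

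By the equivalence in Theorem \ref{th:det-cont}, $f_n=(-1)^n\det(\cdot)$, where the Toeplitz--Hessenberg matrix has entries $H_{i-j+1}/G_{i-j+1}$ on and below the diagonal and $1$ on the superdiagonal. Substituting the values of $H_j/G_j$ computed above gives exactly the displayed matrix, and multiplying by $(2n)!$ gives the claim.

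The substance is all contained in Theorem \ref{th:det-cont}; the remaining points are bookkeeping. One must check the index shift in $g_j$ carefully: it is $[2N+2j]_q[2N+2j+1]_q$, not the $[2N+2j-1]_q[2N+2j]_q$ used for Theorem \ref{th-det-eul}. One must also justify that the substitution $y=x^2$ is legitimate, which holds because every identity in Theorem \ref{th:det-cont} is an identity of formal power series. If a self-contained argument is preferred, one can instead mimic the proof of Theorem \ref{th-det-mod-ber}. First derive the analogue of Lemma \ref{rel-eul},
$$
\sum_{k=0}^n\frac{[2N+1]_q!\,\mathcal{\widehat E}_{N,2k}}{(2k)!\,[2N+2n-2k+1]_q!}=\delta_{n,0},
$$
by comparing coefficients of $x^{2n}$. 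Then expand the determinant along the first row and proceed by induction on $n$.
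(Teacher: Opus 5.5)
Your proposal is correct and follows the paper's route. The paper obtains this result ``similarly to Theorem~\ref{th-det-eul}'', i.e., by applying Theorem~\ref{th:det-cont} with $h_j=1$, $x$ replaced by $x^2$, and, as you correctly identify (consistent with Theorem~\ref{th:q-eul2-cont}), $g_j=[2N+2j]_q[2N+2j+1]_q$, so that $H_j/G_j=[2N+1]_q!/[2N+2j+1]_q!$. Your remaining bookkeeping is also in order: the vanishing of the odd coefficients, $f_n=\widehat{\mathcal E}_{N,2n}/(2n)!$ with $f_0=1$, and the sign $(-1)^n$.
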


When $q\to 1$, Theorem \ref{th-det-eul2} reduces to \cite[Theorem 2.1]{Ko17}.

Similarly to Theorem \ref{th:q-eul-cont}, we have the following continued fraction expansion.   

\begin{theorem}  
We have 
{\small 
\begin{align*}
&\sum_{k=0}^\infty\mathcal{\widehat E}_{N, k}\frac{x^k}{k!}\\
&=1-\cfrac{x^2}{[2 N+2]_q[2 N+3]_q+x^2-\cfrac{[2 N+2]_q[2 N+3]_q x^2}{[2 N+4]_q[2 N+5]_q+x^2-\cfrac{[2 N+4]_q[2 N+5]_q x^2}{[2 N+6]_q[2 N+7]_q {+ x^2}-\ddots}}}\, 
\end{align*} 
} 
\label{th:q-eul2-cont}
\end{theorem}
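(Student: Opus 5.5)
The plan is to apply Theorem \ref{th:det-cont} with $x$ replaced by $x^2$ and with the choices
$$
g_j=[2N+2j]_q[2N+2j+1]_q,\qquad h_j=1\qquad(j\ge 1).
$$
The argument is the same as for Theorem \ref{th:q-eul-cont}, with every index shifted by one.

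First I check that these choices reproduce the defining series (\ref{def:q-eul2}). We have
$$
G_j=g_1\cdots g_j=\prod_{i=1}^{j}[2N+2i]_q[2N+2i+1]_q=\frac{[2N+2j+1]_q!}{[2N+1]_q!},
$$
because the product telescopes: it runs over all factors $[2N+2]_q,[2N+3]_q,\dots,[2N+2j+1]_q$. Since $H_j=1$, it follows that
$$
\sum_{j\ge0}\frac{H_j}{G_j}x^{2j}=\sum_{j\ge0}\frac{[2N+1]_q!\,x^{2j}}{[2N+2j+1]_q!}.
$$
This is exactly the series whose reciprocal defines $\sum_k\mathcal{\widehat E}_{N,k}x^k/k!$.

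Next I substitute these $g_j,h_j$ into the continued fraction of Theorem \ref{th:det-cont}, using $x^2$ in place of $x$. The partial denominators become $g_j+h_jx^2=[2N+2j]_q[2N+2j+1]_q+x^2$. The partial numerators become $g_{j-1}h_jx^2=[2N+2j-2]_q[2N+2j-1]_q\,x^2$ for $j\ge2$, and $h_1x^2=x^2$ for $j=1$. This is precisely the displayed expansion in the statement.

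To keep the argument self-contained, as in the proof of Theorem \ref{th:q-ber-cont}, I would also write out the convergents explicitly:
$$
P_n=\frac{[2N+2n+1]_q!}{[2N+1]_q!},\qquad Q_n=\frac{[2N+2n+1]_q!}{[2N+1]_q!}\sum_{k=0}^n\frac{[2N+1]_q!\,x^{2k}}{[2N+2k+1]_q!}.
$$
The key step is to verify the three-term recurrence
$$
Y_n=\bigl([2N+2n]_q[2N+2n+1]_q+x^2\bigr)Y_{n-1}-[2N+2n-2]_q[2N+2n-1]_q\,x^2\,Y_{n-2}
$$
for both $Y=P$ and $Y=Q$.

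For $P$, write $P_{n-1}=g_{n-1}P_{n-2}$ and $P_n=g_nP_{n-1}$. The two $x^2$ terms then cancel, since $x^2P_{n-1}-g_{n-1}x^2P_{n-2}=0$, and what remains is $g_nP_{n-1}=P_n$.

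For $Q$, I compare coefficients of $x^{2k}$. Multiplying $Q_{n-1}$ by $g_n$ reproduces all terms of $Q_n$ except the top one, $x^{2n}$. The remaining $x^2Q_{n-1}-g_{n-1}x^2Q_{n-2}$ supplies exactly that missing $x^{2n}$ term, because $Q_{n-1}-g_{n-1}Q_{n-2}=x^{2n-2}$.

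Finally I check the initial values. With $P_0=Q_0=1$, $P_1=g_1$ and $Q_1=g_1+x^2$, these match the first two convergents of the continued fraction, so $P_n/Q_n$ is the $n$-th convergent.

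By the approximation property cited from \cite[p.~696]{Ko20}, $P_n/Q_n$ agrees with the reciprocal series modulo $x^{2n+2}$. Letting $n\to\infty$ in the formal power series sense gives the identity.

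The only real obstacle is bookkeeping. One has to confirm that the telescoping of the $g_j$ starts at $[2N+2]_q$ rather than $[2N+1]_q$, since this is what distinguishes the present case from Theorem \ref{th:q-eul-cont}. One also has to check the first partial numerator, which is $x^2$ and not $g_0x^2$. Everything else is a direct consequence of Theorem \ref{th:det-cont}.
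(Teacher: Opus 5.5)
Your proposal is correct and follows the paper's route. The paper gives no separate proof; it obtains the result ``similarly to Theorem~\ref{th:q-eul-cont}'', i.e., by specializing Theorem~\ref{th:det-cont} with $h_j=1$, $x\mapsto x^2$ and $g_j=[2N+2j]_q[2N+2j+1]_q$. Your explicit verification of the convergents (telescoping of $G_j$, the three-term recurrences for $P_n,Q_n$, the initial values, and the agreement modulo $x^{2n+2}$) is accurate and simply mirrors the proof of Theorem~\ref{th:q-ber-cont}.
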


When $q\to 1$, Theorem \ref{th:q-eul2-cont} reduces to \cite[Theorem 4]{Ko20}.

\section{Finite (incomplete numbers)}   

The infinite versions of Theorem \ref{th:det-cont} can be generalized into the finite versions.  
When $r=1$, $f_n$ are known as {\it restricted} numbers. When $R\to\infty$, $f_n$ are known as {\it associated} numbers.  Both {families are examples of}    {\it incomplete} numbers (see, e.g., \cite{Ko16,Ko18,KLM16,KMS16}).  
Hence, Theorem \ref{th:det-cont} is a special case when $r=1$ and $R\to\infty$ in Theorem \ref{th:det-cont-finite}.   {In contrast to the classical infinite setting, the incomplete/truncated case naturally encodes restricted or associated numbers, and this viewpoint is one of the main structural extensions developed in this paper.}

\begin{theorem}  
Let $1\le r<R$ hold for integers $r$ and $R$. 
For three sequences $\{f_n\}_{n\ge 0}$ (with $f_0={1}$), $\{g_n\}_{n={1}}^R$, and $\{h_n\}_{n={1}}^R$, we have 
{\small 
\begin{align*} 
&\sum_{n=0}^\infty f_n x^n=\left(1+\sum_{j=r}^R\frac{h_1\cdots h_j}{g_1\cdots g_j}x^j\right)^{-1}:=\left(1+\sum_{j=r}^R\frac{H_j}{G_j}x^j\right)^{-1}\\
&=1-\cfrac{h_1\cdots h_r x^r}{g_1\cdots g_r+h_1\cdots h_r x^r-\cfrac{g_1\cdots g_r h_{r+1}x}{g_{r+1}+h_{r+1}x-\cfrac{g_{r+1}h_{r+2}x}{g_{r+2}+h_{r+2}x-{\atop\ddots-\cfrac{g_{R-1}h_R x}{g_R+h_R x}}}}}\\ 
&\Longleftrightarrow\\
&{\text{\ for\  } n\geq r, }\ \ f_n=(-1)^n\left|\begin{array}{ccc} 
\underbrace{ 
\begin{array}{ccc}
0&1&0\\
\vdots&\ddots&1\\
0&&\\
\frac{H_r}{G_r}&\ddots&\\ 
\vdots&\ddots&\\
\frac{H_R}{G_R}&&\ddots\\ 
0&\ddots&\\
\vdots&&\ddots\\
0&\cdots&0
\end{array}
}_{n-R}
&\underbrace{ 
\begin{array}{ccc}
\cdots&&\\
&&\\
\ddots&&\\
&&\\
&&\\
&&\\ 
&&\\
\ddots&&\\
&\ddots&\ddots\\
\frac{H_R}{G_R}&\cdots&\frac{H_r}{G_r}
\end{array} 
}_{R-r+1}
&\underbrace{ 
\begin{array}{ccc}
&\cdots&0\\
&&\vdots\\
&&\\
&&\\
&&\\
&&\\
&&\\
\ddots&&\vdots\\
&1&0\\
&\ddots&1\\
0&\cdots&0\\
\end{array}
}_{r-1} 
\end{array} 
\right|\\
&\Longleftrightarrow\quad \sum_{{k=\max(0,n-R)}}^{n-r}\frac{f_k H_{n-k}}{G_{n-k}}=\begin{cases}
1&\text{if $n=0$},\\
0&\text{if $n\ge 1$}
\end{cases}\\ 
&\Longleftrightarrow\\ 
&(-1)^n\left|\begin{array}{ccccc}
f_1&1&0&&\\  
f_2&f_1&1&&\\
\vdots&\vdots&\ddots&&0\\
f_{n-1}&f_{n-2}&\cdots&f_1&1\\ 
f_n&f_{n-1}&\cdots&f_2&f_1
\end{array}\right|=\begin{cases}
\frac{H_n}{G_n}&\text{if $r\le n\le R$},\\
0&\text{otherwise}
\end{cases}\\
&\Longleftrightarrow\\ 
&f_n=\sum_{r t_r+(r+1)t_{r+1}+\cdots+R t_R=n}\binom{t_r+\cdots+t_R}{t_r,\dots,t_R}(-1)^{t_r+\cdots+t_R}\\
&\qquad\qquad\times\left(\frac{H_r}{G_r}\right)^{t_r}\left(\frac{H_{r+1}}{G_{r+1}}\right)^{t_{r+1}}\cdots\left(\frac{H_R}{G_R}\right)^{t_R}\\
&\phantom{f_n}=\sum_{k=1}^n(-1)^k\sum_{i_1+\cdots+i_k=n\atop r\le i_1,\dots,i_k\le R}\frac{H_{i_1}}{G_{i_1}}\cdots\frac{H_{i_k}}{G_{i_k}}\\
&\Longleftrightarrow\\ 
&\sum_{t_1+2 t_2+\cdots+n t_n=n}\binom{t_1+\cdots+t_n}{t_1,\dots,t_n}(-1)^{t_1+\cdots+t_n}f_1^{t_1}f_{2}^{t_{2}}\cdots f_n^{t_n}\\
&=\sum_{k=1}^n(-1)^k\sum_{i_1+\cdots+i_k=n\atop i_1,\dots,i_k\ge 1}f_{i_1}\cdots f_{i_k}=\begin{cases}
\frac{H_n}{G_n}&\text{if $r\le n\le R$},\\
0&\text{otherwise}\,.
\end{cases}   
\end{align*}
} 
\label{th:det-cont-finite}
\end{theorem}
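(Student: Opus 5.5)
Everything will be reduced to one power series. Put $a_0=1$, $a_j=H_j/G_j$ for $r\le j\le R$, and $a_j=0$ for all other $j\ge 1$, and let $A(x)=\sum_{j\ge 0}a_jx^j=1+\sum_{j=r}^R\frac{H_j}{G_j}x^j$. Each item in Theorem~\ref{th:det-cont-finite} is then the corresponding item of Theorem~\ref{th:det-cont}, applied to the sequence $\{a_j\}$ in place of $\{H_j/G_j\}$. The one exception is the continued fraction, which needs its own argument. The proof of Theorem~\ref{th:det-cont} used the special form $H_j/G_j=h_1\cdots h_j/(g_1\cdots g_j)$ only for the continued fraction. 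The inversion relation, Cameron's operator, Trudi's formula and the compositions formula of \cite{Ko20b} all hold for an arbitrary coefficient sequence with constant term $1$.

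\textbf{Coefficient identity and determinants.} The identity $F(x)A(x)=1$, with $F=\sum f_nx^n$, is the same as $\sum_{k=0}^n f_ka_{n-k}=\delta_{n,0}$. Since $a_{n-k}$ vanishes unless $n-k\in\{0\}\cup[r,R]$, this reads $f_n+\sum_{k=\max(0,n-R)}^{n-r}f_kH_{n-k}/G_{n-k}=\delta_{n,0}$. I read the displayed sum in the theorem as including the term $f_n a_0$. The Toeplitz--Hessenberg determinant for $f_n$ is the one of Theorem~\ref{th:det-cont} with $a_j$ in place of $H_j/G_j$. The zero bands, with $a_1=\dots=a_{r-1}=0$ and $a_j=0$ for $j>R$, give exactly the displayed banded pattern. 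In the same way, the dual determinant in the $f$'s equals $a_n$, which is $H_n/G_n$ for $r\le n\le R$ and $0$ otherwise. The two Trudi expansions and the two compositions expansions also follow by substitution, because every multinomial or composition term that contains some $a_j$ with $j\notin[r,R]$ vanishes. This restricts the $t_j$ in the multinomial expansion to $r\le j\le R$ and the parts $i_\ell$ in the compositions expansion to $[r,R]$. The equivalence of all these statements with $F=A^{-1}$ is the chain of equivalences already established in Theorem~\ref{th:det-cont}.

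\textbf{Continued fraction (main step).} This is the step I expect to be the main obstacle. I would evaluate the finite continued fraction from the bottom up. Define tails $T_R=g_R+h_Rx$ and $T_j=g_j+h_jx-\frac{g_jh_{j+1}x}{T_{j+1}}$ for $r<j<R$. The claim to prove by downward induction is
\[
T_j=\frac{g_j\,S_{j-1}}{S_j},\qquad S_j:=\sum_{i=j}^{R}\frac{h_{j+1}\cdots h_i}{g_{j+1}\cdots g_i}x^{i-j},
\]
so that $S_R=1$ and $S_{j-1}=1+\frac{h_jx}{g_j}S_j$. The inductive step is a one-line check:
\[
g_j+h_jx-\frac{h_{j+1}xS_{j+1}}{S_{j-1}\cdot 0+S_{j+1}\cdot 0+S_{j+1}}\cdot\frac{g_jS_{j+1}}{S_j}\Big/S_{j+1}
\]
is not the clean form to use; instead one substitutes $T_{j+1}=g_{j+1}S_j/S_{j+1}$, so that $\frac{g_jh_{j+1}x}{T_{j+1}}=g_j\bigl(S_j-1\bigr)/S_j$, and this simplifies to $g_jS_{j-1}/S_j$. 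The same telescoping applied to the top level $g_1\cdots g_r+h_1\cdots h_rx^r-\frac{g_1\cdots g_rh_{r+1}x}{T_{r+1}}$ gives $g_1\cdots g_r\,\frac{H_r}{G_r}x^r\bigl(\cdots\bigr)$. After this, the expression $1-\frac{h_1\cdots h_rx^r}{\cdots}$ should collapse to $1/A(x)$. If the bookkeeping at the top level becomes messy, I would fall back on the convergent polynomials $P_n,Q_n$ with the three-term recurrences, as in the proof of Theorem~\ref{th:det-cont}. These are modified only at the first step, with $P_0=Q_0=1$, $P_1=G_r$ and $Q_1=G_r+H_rx^r$, and one checks by induction that $Q_{R-r+1}/P_{R-r+1}=A(x)$. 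Because the continued fraction is finite, no convergence argument is needed.
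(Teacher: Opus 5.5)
Your proposal is correct, and it follows a different and more self-contained route than the paper. The paper proves Theorem~\ref{th:det-cont-finite} only by citing \cite[Lemmas 1--8]{Ko20b}, which are ready-made determinant and Trudi-type lemmas for restricted and associated sequences. It gives no argument for the finite continued fraction.

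You do two things instead:
\begin{itemize}
\item You observe that every coefficient-side statement is simply Theorem~\ref{th:det-cont} applied to the zero-padded sequence $a_j$. The inversion relation, Trudi's formula and the composition expansion never use the product form $h_1\cdots h_j/(g_1\cdots g_j)$. This explains at once where the banded determinant and the restrictions $r\le i_\ell\le R$ and $t_j=0$ for $j\notin[r,R]$ come from.
\item You evaluate the finite continued fraction directly, by downward induction on the tails $T_j=g_jS_{j-1}/S_j$. This is correct, and your $P_n,Q_n$ fallback also works, with $P_m=G_r\,g_{r+1}\cdots g_{r+m-1}$.
\end{itemize}
The paper's route buys brevity. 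Yours buys an actual proof, and it exposes a misprint. As printed, the coefficient identity omits the term $k=n$, so it would read $0=1$ at $n=0$ and $H_r/G_r=0$ at $n=r$. Including $f_na_0$ is therefore a necessary correction, not just a reading.

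Three things to tidy up:
\begin{itemize}
\item Delete the garbled display in the inductive step. The substitution you give right after it is the whole argument.
\item At the top level, the same identity, with $g_j$ replaced by $G_r$, gives $\frac{G_rh_{r+1}x}{T_{r+1}}=G_r\frac{S_r-1}{S_r}$. So the top denominator is $G_r/S_r+H_rx^r$, not something of the form $g_1\cdots g_r\frac{H_r}{G_r}x^r(\cdots)$. Then $1-\frac{H_rx^r}{G_r/S_r+H_rx^r}=\frac{G_r}{G_r+H_rx^rS_r}=A(x)^{-1}$, because $A=1+\frac{H_r}{G_r}x^rS_r$. This settles it without hedging, and it covers $R=r+1$, where $T_{r+1}=T_R$.
\item For the implication from the determinant formula back to $F=A^{-1}$, you need the formula for all $n\ge 1$, not only for $n\ge r$ as stated. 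For $1\le n<r$ the matrix is strictly upper triangular, so the formula gives $f_n=0$, consistent with $A^{-1}=1+O(x^r)$. Say this explicitly.
\end{itemize}
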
 
\begin{proof}  
For finite versions, we also apply \cite[Lemmas 1--8]{Ko20b}.
\end{proof}

\subsection{Examples}  

{
Let $r=2$ and $R=4$. We illustrate Theorem \ref{th:det-cont-finite} in the case where $n=6$.
First, consider the formula
\[
f_n=\sum_{k=1}^n(-1)^k
\sum_{i_1+\cdots+i_k=n \atop r\le i_1,\dots,i_k\le R}
\frac{H_{i_1}}{G_{i_1}}\cdots\frac{H_{i_k}}{G_{i_k}}.
\]
For $r=2$, $R=4$, and $n=6$, we must sum over all compositions of $6$ with parts in $\{2,3,4\}$. These are
\[
6=2+2+2,\qquad 6=3+3,\qquad 6=2+4,\qquad 6=4+2.
\]
Hence
\[
\begin{aligned}
f_6
&= -\left(\frac{H_2}{G_2}\right)^3
   +\left(\frac{H_3}{G_3}\right)^2
   +\left(\frac{H_2}{G_2}\right)\left(\frac{H_4}{G_4}\right)
   +\left(\frac{H_4}{G_4}\right)\left(\frac{H_2}{G_2}\right) \\
&= -\left(\frac{H_2}{G_2}\right)^3
   +\left(\frac{H_3}{G_3}\right)^2
   +2\left(\frac{H_2}{G_2}\right)\left(\frac{H_4}{G_4}\right).
\end{aligned}
\]
Next, consider the last relation in Theorem \ref{th:det-cont-finite}:
\[
\sum_{k=1}^n(-1)^k
\sum_{i_1+\cdots+i_k=n \atop i_1,\dots,i_k\ge 1}
f_{i_1}\cdots f_{i_k}
=
\begin{cases}
\dfrac{H_n}{G_n} & \text{if } r\le n\le R,\\[1mm]
0 & \text{otherwise}.
\end{cases}
\]
Since $6>R=4$, its right-hand side is $0$. Thus
\[
\sum_{k=1}^6(-1)^k
\sum_{i_1+\cdots+i_k=6 \atop i_1,\dots,i_k\ge 1}
f_{i_1}\cdots f_{i_k}=0.
\]
Now $f_1=0$, so only the compositions
\[
6,\qquad 3+3,\qquad 2+4,\qquad 4+2,\qquad 2+2+2
\]
contribute. Therefore
\[
-f_6+f_3^2+2f_2f_4-f_2^3=0,
\]
that is,
\[
-f_6-f_2^3+f_3^2+2f_2f_4=0.
\]
Thus, for $n=6$, Theorem \ref{th:det-cont-finite} gives both an explicit expression for $f_6$ in terms of $H_j/G_j$ and, independently, a polynomial relation among the $f_j$.}


\section{Applications to Lehmer-Euler numbers}   

In 1935, D. H. Lehmer \cite{Lehmer} introduced and investigated generalized Euler numbers $W_n$, defined by the generating function 
\begin{equation}  
\sum_{n=0}^\infty W_n\frac{t^n}{n!}=\frac{3}{e^t+e^{\omega t}+e^{\omega^2 t}}\,,
\label{def:lehmer-euler}
\end{equation}
where $\omega=(-1+\sqrt{-3})/2$ is the primitive cubic root of unity. In \cite{BK19}, more general numbers were studied in terms of determinants, which involve Bernoulli, Euler, and Lehmer's generalized Euler numbers. In \cite[Propositions 3,4]{KL25}, several expressions of incomplete versions of Lehmer-Euler numbers were given, except for continued fractions. Here, we obtain results for the incomplete version of $q$-Lehmer-Euler numbers $\mathcal W_n^\ast:=W_{n,q}^\ast$ by applying Theorem \ref{th:det-cont-finite} with $g_j=[3 N+3 j-2]_q[3 N+3 j-1]_q[3 N+3 j]_q$ and $h_j=1$ ($j\ge 1$), with $x$ being replaced by $x^3$.   
\begin{theorem}  
{Let $N\in\mathbb N$.} For $1\le r<R$, we have 
{\scriptsize 
\begin{align*} 
&\sum_{n=0}^\infty W_n^\ast\frac{x^n}{n!}=\sum_{n=0}^\infty W_{3 n}^\ast\frac{x^{3 n}}{(3 n)!}:=\left(1+\sum_{j=r}^R\frac{1}{[3 N+3 j]_q!}x^{3 j}\right)^{-1}\\
&=1-\cfrac{x^{3 r}}{[3 N+3 r]_q!+x^{3 r}-\cfrac{[3 N+3 r]_q! x^3}{\prod_{j=1}^{3}[3 N+3 r+j]_q+x^3-\cfrac{\prod_{j=1}^{3}[3 N+3 r+j]_q x^3}{\prod_{j=4}^{6}[3 N+3 r+j]_q+x^3-{\atop\ddots-\cfrac{\prod_{j=-5}^{{-3}}[3 N+3 R+j]_q x^3}{\prod_{j=-2}^{0}[3 N+3 R+j]_q+x^3}}}}}\, 
\end{align*}
} 
{\small  
\begin{align*} 
&W_{3 n}^\ast=
(-1)^n(3 n)!\left|\begin{array}{ccc} 
\underbrace{ 
\begin{array}{ccc}
0&1&0\\
\vdots&\ddots&1\\
0&&\\
\frac{1}{[3 N+3 r]_q!}&\ddots&\\ 
\vdots&\ddots&\\
\frac{1}{[3 N+3 R]_q!}&&\ddots\\ 
0&\ddots&\\
\vdots&&\ddots\\
0&\cdots&0
\end{array}
}_{n-R}
&\underbrace{ 
\begin{array}{ccc}
\cdots&&\\
&&\\
\ddots&&\\
&&\\
&&\\
&&\\ 
&&\\
\ddots&&\\
&\ddots&\ddots\\
\frac{1}{[3 N+3 R]_q!}&\cdots&\frac{1}{[3 N+3 r]_q!}
\end{array} 
}_{R-r+1}
&\underbrace{ 
\begin{array}{ccc}
&\cdots&0\\
&&\vdots\\
&&\\
&&\\
&&\\
&&\\
&&\\
\ddots&&\vdots\\
&1&0\\
&\ddots&1\\
0&\cdots&0\\
\end{array}
}_{r-1} 
\end{array} 
\right|\end{align*}
} 
\begin{align*}
\sum_{{k=\max(0,n-R)}}^{n-r}\frac{W_{3 k}^\ast}{(3 k)![3 N+3 n-3 k]_q!}=\begin{cases}
1&\text{if $n=0$},\\
0&\text{if $n\ge 1$}\,. 
\end{cases}
\end{align*}
\begin{align*} 
&(-1)^n\left|\begin{array}{ccccc}
\frac{W_3^\ast}{3!}&1&0&&\\  
\frac{W_6^\ast}{{6!}}&\frac{W_3^\ast}{3!}&1&&\\
\vdots&\vdots&\ddots&&0\\
\frac{W_{3 n-3}^\ast}{(3 n-3)!}&\frac{W_{3 n-6}^\ast}{(3 n-6)!}&\cdots&\frac{W_6^\ast}{6!}&1\\ 
\frac{W_{3 n}^\ast}{(3 n)!}&\frac{W_{3 n-3}^\ast}{(3 n-3)!}&\cdots&\frac{W_6^\ast}{6!}&\frac{W_3^\ast}{3!}
\end{array}\right|=\begin{cases}
\frac{1}{[3 N+3 n]_q!}&\text{if $r\le n\le R$},\\
0&\text{otherwise}\,.
\end{cases}
\end{align*}
\begin{align*}
&W_{3 n}^\ast=(3 n)!\sum_{r t_r+(r+1)t_{r+1}+\cdots+R t_R=n}\binom{t_r+\cdots+t_R}{t_r,\dots,t_R}(-1)^{t_r+\cdots+t_R}\\
&\qquad\qquad\times\left(\frac{1}{[3 N+3 r]_q!}\right)^{t_r}\left(\frac{1}{[3 N+3 r+3]_q!}\right)^{t_{r+1}}\cdots\left(\frac{1}{[3 N+3 R]_q!}\right)^{t_R}\\
&\phantom{W_{3 n}^\ast}=(3 n)!\sum_{k=1}^n(-1)^k\sum_{i_1+\cdots+i_k=n\atop r\le i_1,\dots,i_k\le R}\frac{1}{[3 N+3 i_1]_q!}\cdots\frac{1}{[3 N+3 i_k]_q!}\,.
\end{align*}
\begin{align*}
&\sum_{t_1+2 t_2+\cdots+n t_n=n}\binom{t_1+\cdots+t_n}{t_1,\dots,t_n}(-1)^{t_1+\cdots+t_n}\left(\frac{W_3^\ast}{3!}\right)^{t_1}\left(\frac{W_6^\ast}{6!}\right)^{t_{2}}\cdots\left(\frac{W_{3 n}^\ast}{(3 n)!}\right)^{t_n}\\
&=\sum_{k=1}^n(-1)^k\sum_{i_1+\cdots+i_k=n\atop i_1,\dots,i_k\ge 1}\frac{W_{3 i_1}^\ast\cdots W_{3 i_k}^\ast}{(3 i_1)!\cdots(3 i_k)!}=\begin{cases}
\frac{1}{[3 N+3 n]_q!}&\text{if $r\le n\le R$},\\
0&\text{otherwise}\,.
\end{cases}   
\end{align*}
\label{cont-lehmer-euler}  
\end{theorem}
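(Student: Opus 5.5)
The plan is to specialize Theorem \ref{th:det-cont-finite} to $h_j=1$ and $g_j=[3N+3j-2]_q[3N+3j-1]_q[3N+3j]_q$, with $x$ replaced by $x^3$. Then
\[
G_j=g_1\cdots g_j=\frac{[3N+3j]_q!}{[3N]_q!},\qquad H_j=1 .
\]
We also set $f_n:=W_{3n}^\ast/(3n)!$ and $f_0=1$. Since the defining series in the statement involves only powers $x^{3j}$, all $W_m^\ast$ with $3\nmid m$ vanish, which justifies $\sum W_n^\ast x^n/n!=\sum W_{3n}^\ast x^{3n}/(3n)!$.

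Comparing with the statement, the entries there are $1/[3N+3j]_q!$ rather than $[3N]_q!/[3N+3j]_q!$. I would therefore read the statement with the normalization absorbed: replace $H_j/G_j$ by $c_j:=1/[3N+3j]_q!$ throughout. Equivalently, take $g_1=[3N+3]_q!$ and $g_j$ as above for $j\ge 2$, so that $G_j=[3N+3j]_q!$ exactly. Every equivalence in Theorem \ref{th:det-cont-finite} uses only the ratios $H_j/G_j$, except the continued fraction. So this choice yields the determinant, the reciprocal sum identity, the inverse determinant, the multinomial formula and the composition formula directly, after multiplying by $(3n)!$ where $f_n$ appears.

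For the continued fraction I would follow the finite analogue of the convergent argument in Theorem \ref{th:q-ber-cont}. The first partial numerator and denominator are $H_r x^{3r}$ and $G_r+H_rx^{3r}$, with $G_r=[3N+3r]_q!$. The subsequent levels carry $g_{r+k}=\prod_{j=3k-2}^{3k}[3N+3r+j]_q$ and numerators $g_{r+k-1}x^3$, matching the displayed fraction down to the last level $g_R=\prod_{j=-2}^{0}[3N+3R+j]_q$. I will check that the polynomials
\[
Q_m=G_{r+m}\Bigl(1+\sum_{j=r}^{r+m}c_jx^{3j}\Bigr),\qquad P_m=G_{r+m}
\]
satisfy the three-term recurrence $Y_m=(g_{r+m}+x^3)Y_{m-1}-g_{r+m-1}x^3Y_{m-2}$ with the right initial values. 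This is a telescoping check: the term $x^3\cdot G_{r+m-1}c_{r+m-1}x^{3(r+m-1)}$ combines with $-g_{r+m-1}x^3G_{r+m-2}c_{r+m-2}\cdots$ and the pieces cancel. Since the fraction is finite, the last convergent $P_{R-r}/Q_{R-r}$ equals the reciprocal of the truncated series exactly, with no limit needed.

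I expect the main obstacle to be bookkeeping rather than conceptual. First, the normalization between $[3N]_q!/[3N+3j]_q!$ and $1/[3N+3j]_q!$ has to be handled consistently; I would fix it by the modified $g_1$ and note that Theorem \ref{th:det-cont-finite} does not require $g_1$ to follow the pattern. Second, the index ranges in the banded determinant and in the sum $\sum_{k=\max(0,n-R)}^{n-r}$ must be checked. The latter comes from comparing coefficients of $x^{3n}$ in $\bigl(\sum f_kx^{3k}\bigr)\bigl(1+\sum_{j=r}^R c_jx^{3j}\bigr)=1$. Here the $n=0$ term is $f_0\cdot 1$, so the sum should be understood with the constant $1$ of the truncated series playing the role of $H_0/G_0$. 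I would state this convention explicitly.
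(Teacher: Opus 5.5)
Your proposal is correct and follows the same route as the paper, which obtains this theorem simply by specializing Theorem~\ref{th:det-cont-finite} to $h_j=1$, $g_j=[3N+3j-2]_q[3N+3j-1]_q[3N+3j]_q$ and $x\mapsto x^3$; your explicit convergent check for the finite continued fraction just unpacks that specialization in the style of the proof of Theorem~\ref{th:q-ber-cont}. Your bookkeeping repairs are genuinely needed for the statement as displayed: the literal $g_j$ give $H_j/G_j=[3N]_q!/[3N+3j]_q!$, so taking $g_1=[3N+3]_q!$ is the right fix, and in the reciprocal-sum identity the $k=n$ term with weight $H_0/G_0=1$ must be included for every $n$, not only for $n=0$ (also, taking $Y_{-1}=G_{r-1}$ makes your uniform recurrence reproduce the displayed first numerator $[3N+3r]_q!\,x^3$).
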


\section{Applications to determinantal harmonic numbers}  

In \cite{KP19}, for $r\ge 0$, the {\it determinantal hyperharmonic numbers} $h_n^{(r)}$ {\it  of order $r$} are defined by the generating function 
\begin{equation}  
\frac{1}{1-\log(1+x)/(1+x)^r}=\sum_{n=0}^\infty h_n^{(r)}x^n\quad(|x|<1)\,,  
\label{def:dethyperharmo} 
\end{equation} 
while the generating function of hyperharmonic numbers $H_n^{(r)}$ of order $r$ is given by 
\begin{equation}  
-\frac{\log(1-z)}{(1-z)^r}=\sum_{n=1}^\infty H_n^{(r)}z^n\,. 
\end{equation} 
Then the numbers $H_n^{(r)}$ are produced iteratively by 
\begin{equation}  
H_n^{(r)}=\sum_{k=1}^n H_k^{(r-1)}\quad(r\ge 1)\quad\hbox{with}\quad H_n^{(0)}=\frac{1}{n}\,.
\label{def:hyperharmo} 
\end{equation}
It is known by \cite{CG96} that $H_n^{(r)}$ can be expressed in terms of binomial coefficients and harmonic numbers with the formula 
$$
H_n^{(r)}=\binom{n+r-1}{r-1}(H_{n+r-1}-H_{r-1})\,.  
$$

In order to generalize the previous results, we introduce the partial summation of $\log(1+z)$ by 
\begin{equation}  
F_m(z)=z-\frac{z^2}{2}+\cdots+\frac{(-1)^{m-1}z^m}{m}\,. 
\label{def:part-sum-log} 
\end{equation} 
The {\it restricted determinantal hyperharmonic numbers} $h_{n,\le\ell}^{(r)}$ {\it  of order $r$} are defined by 
\begin{equation}  
\sum_{n=0}^\infty h_{n,{\le\ell}}^{(r)}x^n=\left(1+\sum_{j=1}^\ell(-1)^j H_j^{(r)}x^j\right)^{-1}\quad(|x|<1)\,.    
\end{equation}  
Then we have the following.   

\begin{Prop}  
For $1\le\ell\le n$, we have 
$$
h_{n,\le\ell}^{(r)}=\left|\begin{array}{cc} 
\underbrace{ 
\begin{array}{ccc}
H_1^{(r)}&1&0\\
\vdots&&\ddots\\
H_{\ell}^{(r)}&&\\
0&\ddots&\\ 
&&\ddots\\
&&0
\end{array}
}_{n-\ell} 
\underbrace{ 
\begin{array}{ccc}
&&\\
\ddots&&\\
&\ddots&\\
&\ddots&0\\
&&1\\
H_{\ell}^{(r)}&\cdots&H_1^{(r)}
\end{array}
}_{\ell} 
\end{array} 
\right|\,. 
$$
\label{prp:res-det-hyperharmo} 
\end{Prop}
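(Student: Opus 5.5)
This is the case $r=1$, $R=\ell$ of Theorem~\ref{th:det-cont-finite}, after a sign adjustment. Set
$$
\frac{H_j}{G_j}:=(-1)^jH_j^{(r)}\qquad(1\le j\le\ell),
$$
so that $h_{n,\le\ell}^{(r)}$ plays the role of $f_n$ there. If one wants genuine sequences $g_j,h_j$, take $g_j=1$ and $h_j=-H_j^{(r)}/H_{j-1}^{(r)}$. This works because $H_j^{(r)}>0$ for $r\ge0$, since $H_n^{(r)}$ is a sum of positive numbers by (\ref{def:hyperharmo}). In fact only the ratios $H_j/G_j$ enter the equivalences, so this choice plays no real role.

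Rather than rely on the pictorial determinant of Theorem~\ref{th:det-cont-finite}, I will argue self-containedly via the coefficient identity. The relation
$$
\Bigl(\sum_n h_{n,\le\ell}^{(r)}x^n\Bigr)\Bigl(1+\sum_{j=1}^\ell(-1)^jH_j^{(r)}x^j\Bigr)=1
$$
gives, for $n\ge1$, the recurrence
$$
h_{n,\le\ell}^{(r)}=\sum_{j=1}^{\min(n,\ell)}(-1)^{j-1}H_j^{(r)}\,h_{n-j,\le\ell}^{(r)},\qquad h_{0,\le\ell}^{(r)}=1 .
$$

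Let $D_n$ denote the $n\times n$ Toeplitz--Hessenberg determinant in the statement. Its entries are $H_i^{(r)}$ on the $(i-1)$-th subdiagonal for $1\le i\le\ell$, $1$ on the superdiagonal, and zeros elsewhere. Expanding $D_n$ along the first column (or equivalently the last row), the entry $H_j^{(r)}$ in position $(j,1)$ has a complementary minor that is block triangular. One block is upper triangular with $1$'s on its diagonal, and the other is $D_{n-j}$. This yields
$$
D_n=\sum_{j=1}^{\min(n,\ell)}(-1)^{j-1}H_j^{(r)}D_{n-j},\qquad D_0=1 .
$$
This is the standard Hessenberg expansion, exactly as in the proof of Theorem~\ref{th-det-mod-ber}; it is also the inversion relation quoted in the proof of Theorem~\ref{th:det-cont} with $R(j)=H_j^{(r)}$ for $j\le\ell$ and $R(j)=0$ for $j>\ell$.

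The two sequences $h_{n,\le\ell}^{(r)}$ and $D_n$ satisfy the same recurrence with the same initial value, so induction on $n$ gives $D_n=h_{n,\le\ell}^{(r)}$ for all $n$, in particular for $n\ge\ell$. No overall sign $(-1)^n$ appears. This matches Theorem~\ref{th:det-cont-finite}: there $f_n=(-1)^n\det\bigl((-1)^{i}H_i^{(r)}\bigr)$, and conjugating by $\mathrm{diag}\bigl((-1)^i\bigr)$ turns that into $(-1)^n\cdot(-1)^n\det\bigl(H_i^{(r)}\bigr)$, just as in the proof of Theorem~\ref{th-det-cau}.

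The only step needing care is the sign bookkeeping in the cofactor expansion. The cofactor of position $(j,1)$ carries the sign $(-1)^{j+1}$, and the triangular block contributes the product $1^{j-1}$. Combined, these give precisely the coefficient $(-1)^{j-1}$ appearing in the power-series recurrence. The hypothesis $1\le\ell\le n$ only ensures that the displayed band shape (widths $n-\ell$ and $\ell$) makes sense; the identity itself holds for every $n$.
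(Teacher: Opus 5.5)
Your argument is correct and follows essentially the paper's route. The paper gives no separate proof; it obtains the proposition as the restricted case $r=1$, $R=\ell$ of Theorem~\ref{th:det-cont-finite} with $H_j/G_j=(-1)^jH_j^{(r)}$, plus the sign conjugation used for Theorem~\ref{th-det-cau}. Your direct Hessenberg-recurrence check makes this self-contained and also covers $\ell=1$, which the hypothesis $r<R$ of Theorem~\ref{th:det-cont-finite} formally excludes.

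There are two harmless slips. First, the unit-diagonal block in the complementary minor is lower, not upper, triangular. Second, in your aside you should take $h_1=-H_1^{(r)}$, since $H_0^{(r)}$ is $0$ (or undefined for $r=0$).
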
  

\noindent 
{\it Remark.}  
When $\ell\ge n$, we have $h_{n,\le\ell}^{(r)}=h_n^{(r)}$.

In \cite{KP19}, for $m,n,r\ge 0$,  the {\it shifted determinantal hyperharmonic numbers} $h_{n,m}^{(r)}$ are defined by  
\begin{equation}  
\sum_{n=0}^\infty h_{n,m}^{(r)}x^n=\left(1+\frac{F_{m-1}(x)-\log(1+x)}{(-x)^{m-1}(1+x)^r}-x\sum_{j=1}^r\frac{H_{m-1}^{(j)}}{(1+x)^{r-j+1}}\right)^{-1}\,. 
\label{def:shift-det-hyperharmo} 
\end{equation}
Then we have the determinant expressions 
$$
h_{n,m}^{(r)}=\left|\begin{array}{ccccc}
H_m^{(r)}&1&0&&\\  
H_{m+1}^{(r)}&H_m^{(r)}&1&&\\
\vdots&\vdots&\ddots&1&0\\
H_{m+n-2}^{(r)}&H_{m+n-3}^{(r)}&\cdots&H_m^{(r)}&1\\ 
H_{m+n-1}^{(r)}&H_{m+n-2}^{(r)}&\cdots&H_{m+1}^{(r)}&H_{m}^{(r)}
\end{array}\right|
$$ 
and 
$$
H_{m+n-1}^{(r)}=\left|\begin{array}{ccccc}
h_{1,m}^{(r)}&1&0&&\\  
h_{2,m}^{(r)}&h_{1,m}^{(r)}&1&&\\
\vdots&\vdots&\ddots&1&0\\
h_{n-1,m}^{(r)}&h_{n-2,m}^{(r)}&\cdots&h_{1,m}^{(r)}&1\\ 
h_{n,m}^{(r)}&h_{n-1,m}^{(r)}&\cdots&h_{2,m}^{(r)}&h_{1,m}^{(r)}
\end{array}\right|\,. 
$$ 

One of the main aims of this section is to give a determinantal expression, as shown in Proposition~\ref{th:det-shift-rest-hyperharmo}. 
For convenience, we set 
\begin{equation}  
\mathfrak L_m^{(r)}(z)=\sum_{k=1}^m\frac{z^k}{k^r}\,, 
\label{def:part-polylog}
\end{equation}  
so when $m\to\infty$, 
$$
\mathfrak L_\infty^{(r)}(z)=\sum_{k=1}^\infty\frac{z^k}{k^r}={\rm Li}_r(z)
$$
is the polylogarithm function.   
For $m,n,r\ge 0$ and $\ell\ge 1$,  the {\it restricted shifted determinantal hyperharmonic numbers} $h_{n,m,\le\ell}^{(r)}$ are defined by  
\begin{align}  
&\sum_{n=0}^\infty h_{n,m,\le\ell}^{(r)}x^n
=\left(1+\frac{H_{m-1}^{(r)}x\bigl(1-(-x)^\ell\bigr)}{1+x}\right.\notag\\
&\qquad\left.+\frac{\mathfrak L_{\ell+m-1}^{(r)}(-x)-\mathfrak L_{m-1}^{({r})}(-x)}{(1+x)(-x)^{m-1}}-\frac{(-x)^{\ell+1}(H_{\ell+m-1}^{(r)}-H_{m-1}^{(r)})}{1+x}\right)^{-1}\,. 
\label{def:rist-shift-det-hyperharmo} 
\end{align}
Notice that when $\ell\to\infty$, this expression reduces to that of (\ref{def:shift-det-hyperharmo}). Hence, $h_{n,m}^{(r)}=h_{n,m,\le\infty}^{(r)}$. 

\begin{Prop}  
For $1\le\ell\le n$, $m\ge 1$, and $r\ge 0$, we have 
$$
h_{n,m,\le\ell}^{(r)}=\left|\begin{array}{cc} 
\underbrace{ 
\begin{array}{ccc}
H_m^{(r)}&1&0\\
\vdots&&\ddots\\
H_{m+\ell-1}^{(r)}&&\\
0&\ddots&\\ 
&&\ddots\\
&&0
\end{array}
}_{n-\ell} 
\underbrace{ 
\begin{array}{ccc}
&&\\
\ddots&&\\
&\ddots&\\
&\ddots&0\\
&&1\\
H_{m+\ell-1}^{(r)}&\cdots&H_m^{(r)}
\end{array}
}_{\ell} 
\end{array} 
\right|\,. 
$$
\label{th:det-shift-rest-hyperharmo}
\end{Prop}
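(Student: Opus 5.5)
The plan is to reduce Proposition~\ref{th:det-shift-rest-hyperharmo} to the finite Trudi/inversion mechanism of Theorem~\ref{th:det-cont-finite}, with $r=1$ in the notation of that theorem and the truncation length $R=\ell$. The key claim is that the denominator in (\ref{def:rist-shift-det-hyperharmo}) equals the polynomial
$$
1+\sum_{j=1}^{\ell}(-1)^j H_{m+j-1}^{(r)}x^j .
$$
Once this is established, we have $\sum_n h_{n,m,\le\ell}^{(r)}x^n=\bigl(1+\sum_{j=1}^\ell (-1)^jH_{m+j-1}^{(r)}x^j\bigr)^{-1}$. We then set $H_j/G_j:=(-1)^jH_{m+j-1}^{(r)}$ for $1\le j\le\ell$ and $0$ beyond. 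The banded Toeplitz--Hessenberg determinant of Theorem~\ref{th:det-cont-finite} gives $h_{n,m,\le\ell}^{(r)}=(-1)^n\det\bigl((-1)^{i-j+1}H_{m+i-j}^{(r)}\bigr)$. The alternating signs are then removed exactly as in the proof of Theorem~\ref{th-det-cau}: multiply row $i$ by $(-1)^i$ and column $j$ by $(-1)^j$. This leaves the ones on the superdiagonal and the factor $(-1)^n$ cancels. The result is the stated banded determinant with entries $H_m^{(r)},\dots,H_{m+\ell-1}^{(r)}$ and zeros below the band, valid for $n\ge\ell$.

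The main work, and the expected obstacle, is verifying the polynomial identity for the denominator. I would multiply everything by $(1+x)$ and compare coefficients of $x^j$. On the right this reduces to checking
$$
(1+x)\sum_{j=1}^\ell(-1)^jH_{m+j-1}^{(r)}x^j = H_{m-1}^{(r)}x\bigl(1-(-x)^\ell\bigr)+\frac{\mathfrak L_{\ell+m-1}^{(r)}(-x)-\mathfrak L_{m-1}^{(r)}(-x)}{(-x)^{m-1}}-(-x)^{\ell+1}\bigl(H_{\ell+m-1}^{(r)}-H_{m-1}^{(r)}\bigr).
$$
The left side telescopes to
$$
\sum_{j=1}^{\ell}(-1)^j\bigl(H_{m+j-1}^{(r)}-H_{m+j-2}^{(r)}\bigr)x^j - (-1)^{\ell}H_{m+\ell-1}^{(r)}x^{\ell+1} - H_{m-1}^{(r)}x,
$$
up to sign conventions. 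The differences $H_{k}^{(r)}-H_{k-1}^{(r)}$ must then match the coefficients $(-1)^{k-m+1}\cdot(\text{coefficient of }(-x)^{k})/(-x)^{m-1}$ coming from the $\mathfrak L$ terms. The boundary terms $x^1$ and $x^{\ell+1}$ should be absorbed by the $H_{m-1}^{(r)}$ and $H_{\ell+m-1}^{(r)}-H_{m-1}^{(r)}$ pieces.

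A caveat: by (\ref{def:hyperharmo}), $H_k^{(r)}-H_{k-1}^{(r)}=H_k^{(r-1)}$, whereas the $\mathfrak L^{(r)}$ coefficient is $1/k^r$. These agree only in special cases (e.g.\ $r=1$ gives $1/k$), so the definition (\ref{def:rist-shift-det-hyperharmo}) is presumably intended with the appropriate index, in the spirit of \cite{KP19}. If the bookkeeping does not close, the fallback is to take the coefficient identity $\sum_{k}h_{k,m,\le\ell}^{(r)}(-1)^{n-k}H_{m+n-k-1}^{(r)}=0$ (for $n\ge1$) as the defining content. Then either run the first-row expansion induction of the proof of Theorem~\ref{th-det-mod-ber}, or invoke the inversion relation in the proof of Theorem~\ref{th:det-cont}, which yields the banded determinant directly. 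The truncation $j\le\ell$ is what produces the zeros below the band.
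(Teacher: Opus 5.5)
Your plan (identify the denominator in (\ref{def:rist-shift-det-hyperharmo}) with $1+\sum_{j=1}^{\ell}(-1)^jH^{(r)}_{m+j-1}x^j$, apply Theorem~\ref{th:det-cont-finite} with $r=1$, $R=\ell$, then strip signs as in Theorem~\ref{th-det-cau}) is the route the paper's framework intends; the paper gives no separate proof. But the step you postpone as ``the main work'' is not merely unverified: for (\ref{def:rist-shift-det-hyperharmo}) as printed, it is false. Done correctly, your telescoping reads
\[
(1+x)\sum_{j=1}^{\ell}(-1)^jH^{(r)}_{m+j-1}x^j=\sum_{j=1}^{\ell}(-1)^j\bigl(H^{(r)}_{m+j-1}-H^{(r)}_{m+j-2}\bigr)x^j-H^{(r)}_{m-1}x+(-1)^{\ell}H^{(r)}_{m+\ell-1}x^{\ell+1}.
\]
The last sign is $+$, not $-$ as you wrote. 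The printed right-hand side instead contributes $+H^{(r)}_{m-1}x$ and $(-x)^{\ell+1}\bigl(2H^{(r)}_{m-1}-H^{(r)}_{m+\ell-1}\bigr)$. These boundary terms are not ``absorbed'' unless $H^{(r)}_{m-1}=0$. They match only if the $H^{(r)}_{m-1}$-term of (\ref{def:rist-shift-det-hyperharmo}) is $-H^{(r)}_{m-1}x\bigl(1-(-x)^{\ell}\bigr)/(1+x)$. That minus sign is also what the paper's claimed $\ell\to\infty$ reduction to (\ref{def:shift-det-hyperharmo}) needs (compare its term $-xH^{(1)}_{m-1}/(1+x)$ at $r=1$).

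Concretely, take $r=1$, $m=2$, $\ell=n=1$. The printed denominator simplifies to $1+x/2$, so $h^{(1)}_{1,2,\le1}=-\tfrac12$, while the determinant is $H^{(1)}_2=\tfrac32$. Independently, as you suspected, matching the interior coefficients requires $H^{(r)}_k-H^{(r)}_{k-1}=k^{-r}$. That describes the generalized harmonic numbers $\sum_{i\le k}i^{-r}$, which coincide with the hyperharmonic numbers of (\ref{def:hyperharmo}) only when $r=1$. For $r=2$, $m=1$, $\ell=n=2$, the definition gives $h^{(2)}_{2,1,\le2}=-\tfrac14$, but the determinant with $H^{(2)}_1=1$, $H^{(2)}_2=\tfrac52$ equals $-\tfrac32$.

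So the statement holds, and your argument then goes through verbatim, only after flipping the sign of the $H^{(r)}_{m-1}$-term and either taking $r=1$ or reading $H^{(r)}_k$ as $\sum_{i\le k}i^{-r}$. A proof has to state this correction explicitly. Your fallback does not repair the gap. Declaring the convolution recurrence to be the definition of $h^{(r)}_{n,m,\le\ell}$ is, by Theorem~\ref{th:det-cont-finite}, equivalent to the determinant formula itself. It therefore assumes the conclusion and says nothing about the numbers defined by (\ref{def:rist-shift-det-hyperharmo}). Also, as you wrote it, the $k=n$ term carries $H^{(r)}_{m-1}$ where the constant coefficient $1$ belongs.

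A minor slip as well: multiplying row $i$ by $(-1)^i$ and column $j$ by $(-1)^j$ turns the superdiagonal into $-1$. Use $(-1)^{j+1}$ on the columns instead; this keeps the ones and produces the factor $(-1)^n$ that cancels.
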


A continued fraction expansion of the generating function of $h_n^{(r)}$ can be obtained indirectly because it is difficult to determine $g_j$ and $h_j$ in Theorem \ref{th:det-cont} or in Theorem \ref{th:det-cont-finite} from (\ref{def:dethyperharmo}).  See also \cite[Corollary 8]{Ko20}.  The structure of the hyperharmonic numbers is similar, {see}  
(\ref{def:hyperharmo}). Note that a continued fraction expansion of higher-order harmonic numbers $\sum_{j=1}^n(1/j^m)$ is given in \cite[Corollary 7]{Ko20}. 

\begin{theorem}
We have 
\begin{align*}
&\sum_{n=0}^\infty h_n^{(r)}x^n=\frac{1}{1-\log(1+x)/(1+x)^r}\\
&=\cfrac{1}{1-\cfrac{x}{(1+x)^r+\cfrac{x(1+x)^r}{2-x+\cfrac{2^2 x}{3-2 x+\cfrac{3^2 x}{4-3 x-\ddots}}}}} 
\end{align*}
and 
\begin{align*}
&\sum_{n={1}}^\infty H_n^{(r)}x^n=-\frac{\log(1-x)}{(1-x)^r}\\
&=\cfrac{x}{(1-x)^r-\cfrac{x(1-x)^r}{2+x-\cfrac{2^2 x}{3+2 x-\cfrac{3^2 x}{4+3 x+\cfrac{4^2 x}{5+4 x-\ddots}}}}}\,  
\end{align*}
\label{th:cont-detharmo}
\end{theorem}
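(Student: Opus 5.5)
The plan is to reduce both expansions to the known Cauchy-number continued fraction (\ref{j-cont-cauchy}), and then splice that fraction into an outer layer by elementary algebra.

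\textbf{First identity.} Starting from (\ref{j-cont-cauchy}), we have
\[
\frac{x}{\log(1+x)}=1+\frac{x}{D(x)},\qquad D(x)=2-x+\cfrac{2^2x}{3-2x+\cfrac{3^2x}{4-3x+\ddots}}.
\]
Solving for the logarithm gives
\[
\log(1+x)=\frac{x}{1+x/D}=\frac{xD}{D+x}.
\]
Hence
\[
\frac{\log(1+x)}{(1+x)^r}=\frac{x}{(1+x)^r+(1+x)^r x/D}=\cfrac{x}{(1+x)^r+\cfrac{x(1+x)^r}{D}}.
\]
Substituting this into $1/(1-\log(1+x)/(1+x)^r)$ gives exactly the displayed fraction. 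All these manipulations are valid as identities of formal power series, or of convergents truncated at the same depth, because $D(0)=2\neq0$. I would state the convergence in the same sense as Theorem~\ref{th:det-cont}: the $n$-th convergents agree with the series modulo increasing powers of $x$.

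\textbf{Second identity.} Replace $x$ by $-x$ in (\ref{j-cont-cauchy}). This gives
\[
\frac{-x}{\log(1-x)}=1-\frac{x}{\widetilde D(x)},\qquad \widetilde D(x)=2+x-\cfrac{2^2x}{3+2x-\cfrac{3^2x}{4+3x-\ddots}}.
\]
Then $-\log(1-x)=x\widetilde D/(\widetilde D-x)$. Dividing by $(1-x)^r$ and clearing as before yields
\[
-\frac{\log(1-x)}{(1-x)^r}=\cfrac{x}{(1-x)^r-\cfrac{x(1-x)^r}{\widetilde D}}.
\]

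\textbf{Main obstacle.} The hard part is checking the tail signs of $\widetilde D$ after substituting $-x$. Each partial numerator $k^2x$ becomes $-k^2x$, and each partial denominator $k+1-kx$ becomes $k+1+kx$. This suggests uniform minus signs throughout $\widetilde D$, whereas the statement displays a ``$+\,4^2x$''. Similarly, the first identity shows ``$4-3x-\ddots$'' where (\ref{j-cont-cauchy}) has ``$+$''.

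I would verify the signs by an independent route. Theorem~\ref{th:det-cont} applies with $g_j=j+1$ and $h_j=-j$, taking $f_n=c_n/n!$ and $H_j/G_j=(-1)^j/(j+1)$. It produces the fraction
\[
1-\cfrac{-x}{2-x-\cfrac{2\cdot(-2)x}{3-2x-\cdots}},
\]
which confirms that the pattern is $+k^2x$ at every level. I would then record the displayed sign irregularities as typographical: every level has the same sign in the original, and in the $-x$ version every partial numerator carries a minus sign.
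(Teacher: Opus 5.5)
Your proposal is correct and is essentially the paper's argument. The paper also writes $\log(1+x)/(1+x)^r = x/\bigl((1+x)^r\cdot x/\log(1+x)\bigr)$ and splices in the Cauchy continued fraction (its $q$-version with $q\to1$, $N=1$). The paper writes out only the first identity, so your $x\mapsto -x$ treatment of the second fills in a step it leaves implicit.

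Your sign diagnosis is also right. The partial numerators are uniformly $+k^2x$ in the first expansion and uniformly $-k^2x$ in the second, so the displayed ``$4-3x-\ddots$'' and ``$+\,4^2x$'' are misprints. The former is inherited from the display (\ref{eq:q-cau-cont}).
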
  
\begin{proof}   
By using the continued fraction expansion of Cauchy numbers in (\ref{eq:q-cau-cont}) with $q\to 1$, from (\ref{def:dethyperharmo}), we have 
\begin{align*}
&\sum_{n=0}^\infty h_n^{(r)}x^n=\frac{1}{1-\log(1+x)/(1+x)^r}\\
&=\cfrac{1}{1-\cfrac{x}{(1+x)^r\cfrac{x}{\log(1+x)}}}\\
&=\cfrac{1}{1-\cfrac{x}{(1+x)^r+\cfrac{x(1+x)^r}{2-x+\cfrac{2^2 x}{3-2 x+\cfrac{3^2 x}{4-3 x-\ddots}}}}}\,  
\end{align*}
\end{proof}

\noindent 
{\it Remark.}  
As we use the continued fraction of Cauchy numbers, it holds that 
$$
\sum_{n=0}^\infty h_n^{(r)}x^n\equiv \cfrac{1}{1-\cfrac{x}{(1+x)^r+\cfrac{x(1+x)^r}{2-x+\cfrac{2^2 x}{\ddots{\atop+\cfrac{(m-1)^2 x}{m-(m-1)x}}}}}}
\pmod{x^{m+1}}\,.  
$$ 
That is, this continued fraction can yield the terms $h_n^{(r)}$ ($n=1,2,\dots,m$) correctly.

\section{Concluding remarks}   

In this paper, we focus on the relations between the determinant expressions and the continued fraction expansions. In particular, for continued fractions of the type (\ref{j-cont-cauchy}), we obtained several determinant formulas and coefficient identities.  
However, with type of (\ref{t-cont-cauchy}) and its modification 
$$
\sum_{n=0}^\infty c_n\frac{x^n}{n!}=1+\cfrac{x}{2+\cfrac{x}{3+\cfrac{2 x}{2+\cfrac{2 x}{5+\cfrac{3 x}{2+{\atop\ddots}}}}}}
$$ 
({\it cf.} \cite{Wolfram-Log}), as well as other types, few relations are currently known, though one generalization is given in \cite[Theorem 9]{Ko20}. Because of the complicated structures, nontrivial relations have been derived \cite[Theorem 1]{DK19}.   

On the other hand, there also exist modifications of Trudi-type determinant expressions. See, e.g., \cite[Lemma 4]{KS25} and \cite[Lemma 1]{Ko25}. 
It would be interesting to find out whether or not there are some continued fraction expansions or further identities.

\section*{Author contributions}
Takao Komatsu: Writing -- original draft; Nikita Kalinin: Writing -- review and editing; All authors have read and agreed to the published version of the manuscript.

\section*{Acknowledgments}
The authors would like to thank the organizer of the DART2025Z conference for providing a stimulating and productive research environment.
{The authors thank the anonymous referees for their careful reading of the manuscript and helpful comments and suggestions.}  
T.K. was partly supported by JSPS KAKENHI Grant Number 24K22835.

\section*{Conflict of interest}
Prof. Komatsu is the Guest Editor of special issue "Number theory and its surroundings--New studies and developments
" for AIMS Mathematics. Prof. Komatsu was not involved in the editorial review and the decision to publish this article. All authors declare no conflicts of interest in this paper.

\end{document}